\documentclass[11pt,a4paper,twoside]{article}


\usepackage{amssymb,amsmath,graphicx,multicol,amsthm,epstopdf,fancyhdr,xcolor,titlesec}
\usepackage{mathrsfs,mathtools,amsfonts}
\usepackage[colorlinks=true, pdfstartview=FitV, linkcolor=blue, citecolor=blue, urlcolor=blue]{hyperref}
\usepackage{cleveref}
\usepackage[english]{babel}
\usepackage[bottom,multiple]{footmisc}
\usepackage[margin=2.5cm]{geometry}
\usepackage[font=scriptsize,labelfont=bf]{caption,subcaption}
\usepackage[nospace]{cite} 

\usepackage{lmodern}             


\pagestyle{fancy}

\usepackage{lastpage}

\fancyhead[L]{\footnotesize {}}
\fancyhead[C]{\footnotesize {}}
\fancyhead[R]{\footnotesize{page \thepage\; of \pageref{LastPage}}}

\fancyfoot[C]{}



\titleformat{\section}[block]{\bfseries\large}{\thesection. }{2pt}{}
\titleformat{\subsection}[block]
  {\bfseries\itshape}  
  {\thesubsection.}{2pt}{}


\theoremstyle{plain}
\newtheorem{Th}{Theorem}[section]
\newtheorem{lemma}[Th]{Lemma}
\newtheorem{corollary}[Th]{Corollary}
\newtheorem{proposition}[Th]{Proposition}

\theoremstyle{definition}
\newtheorem{definition}[Th]{Definition}

\newtheorem{remark}[Th]{Remark}

\newtheorem{example}[Th]{Example}

\newtheorem{theorem}[Th]{Theorem}

\allowdisplaybreaks
\everymath{\displaystyle}

\newcommand{\R}{\mathbb{R}}

\newcommand{\changetoline}[1]{%
    \let\parallel\|%
    #1
}

\newcommand{\secref}[1]{%
  \hyperref[#1]{\textcolor{blue}{Section~\ref*{#1}}}%
}

\begin{document}
\thispagestyle{empty}
\begin{center}
{\Large \textbf{The Fractional Two-Sided Quaternionic Dunkl Transform and Heisenberg-Type Inequalities}}
\end{center}

\vspace{1cm}

\begin{center}
\textbf{Mohamed Essenhajy} \footnote{Corresponding Author: mohamed.essenhajy@edu.umi.ac.ma}\footnote{Faculty of Sciences, Moulay Ismail University of Meknes, Morocco.}
\end{center}

\leftskip=2cm

\vspace{2cm}

\noindent\textbf{Abstract}
\\\\
This report investigates the main definitions and fundamental properties of the fractional two-sided quaternionic Dunkl transform in two dimensions. We present key results concerning its structure and emphasize its connections to classical harmonic analysis. Special attention is given to inversion, boundedness, spectral behavior, and explicit formulas for structured functions such as radial or harmonic functions. Within this framework, we establish a generalized form of the classical Heisenberg-type uncertainty principle. Building on this foundation, we further extend the result by proving a higher-order Heisenberg-type inequality valid for arbitrary moments $p \geq 1$, with sharp constants characterized through generalized Hermite functions. Finally, by analyzing the interplay between the two-sided fractional quaternionic Dunkl transform and the two-sided fractional quaternionic Fourier transform, we derive a corresponding uncertainty principle for the latter.
\\\\
\noindent\textbf{Keywords.} Fractional Fourier transform; Fractional Dunkl transform; Quaternionic analysis; Heisenberg's inequality; Uncertainty principles

\vspace{0.3cm}
\noindent\textbf{Mathematics Subject Classification (2010).} Primary 42B10; Secondary 43A32, 44A05, 30G35, 33C52

\leftskip=0cm 

\vspace*{2cm}
\section{Introduction}
Harmonic analysis has evolved through a constant interplay between generalization and specialization, extending classical theories to encompass richer algebraic structures and operational frameworks. The fractional Fourier transform (FrFT), first introduced by Namias~\cite{namias1980}, epitomizes such a generalization by introducing a continuous order parameter that interpolates between the identity operator and the classical Fourier transform, with notable applications in quantum mechanics. Building on this foundation, a wide range of fractional transforms have subsequently been proposed in the literature~\cite{key-GB14,key-haouala,key-hleili,key-tyr,key-samad,key-zayed}. This fractional paradigm has also been integrated into more specialized settings, most prominently within Dunkl theory~\cite{key-GB14,key-GB16}, which extends Fourier analysis through reflection groups and differential-difference operators, as well as within quaternionic analysis through the two-sided fractional quaternion Fourier transform~\cite{key-LSQ21}. These fractional extensions combine continuous parameters with enriched algebraic features, offering powerful tools for the study of function spaces and spectral theory.  

Within this landscape, the fractional two-sided quaternionic Dunkl transform (FrQDT) arises as a natural synthesis that unifies three fundamental directions of generalization: the fractional order parameterization of the FrFT, the algebraic structure of quaternion-valued function spaces, and the reflection group symmetry inherent to Dunkl operators. This transform simultaneously extends the framework established by Ghazouani and Bouzeffour for fractional Dunkl operators~\cite{key-GB14,key-GB16} and the two-sided fractional quaternion Fourier transform introduced by Li et al.~\cite{key-LSQ21}.  

Uncertainty principles have long constituted a cornerstone of harmonic analysis, expressing intrinsic lower bounds on the simultaneous concentration of functions in both spatial and spectral domains, as demonstrated in various quaternionic frameworks~\cite{key-fahlaoui2023,key-uncertainty,key-fahlaoui2022}.  The classical Heisenberg inequality for the Fourier transform has inspired a systematic series of extensions to fractional, Dunkl, and quaternionic settings. In particular, uncertainty principles have recently been established for the fractional Dunkl transform~\cite{key-elgadiri1,key-GB16,key-ghosh,key-kallel} and for the fractional quaternionic Fourier transform~\cite{key-elgadiri2,key-elgadiri3}, highlighting the interplay between fractional parameters and algebraic structures in determining minimal uncertainty bounds. In this work, we address the case of transforms that incorporate all these features by proving a sharp Heisenberg-type inequality for the fractional two-sided quaternionic Dunkl transform. Our results provide the exact lower bound for the product of weighted spatial and spectral moments, thereby extending and unifying known inequalities in related frameworks, and further identify quaternionic Gaussian functions as the unique extremizers.  

The remainder of the paper is organized as follows. \secref{sec1} reviews the necessary preliminaries on one-dimensional fractional Dunkl theory and quaternion algebra. \secref{sec2} introduces the two-sided fractional quaternionic Dunkl transform and establishes its fundamental properties, including inversion formulas, a Plancherel theorem, and Bochner-type identities. \secref{sec3} presents our main result: the proof of the Heisenberg-type uncertainty principle together with the characterization of its extremal functions. 

\section{Preliminaries}\label{sec1}
\subsection{The one-dimensional fractional Dunkl analysis} 
In this subsection, we recall the essential elements of the one-dimensional fractional Dunkl analysis, including the definition of the fractional Dunkl operator, its associated kernel, and the corresponding transform. These notions extend the classical Dunkl framework by incorporating a deformation parameter \( \theta \in \mathbb{R} \), and they play a central role in the developments that follow. We write
\[
x \cdot y = \sum_{i=1}^{2} x_i y_i
\]
for the inner product on \( \mathbb{R}^2 \), and abbreviate \( x^2 = x \cdot x \). The Euclidean norm is defined by \( |x| = \sqrt{x \cdot x} \). For further details and proofs, we refer the reader to \cite{key-GB16}.

For a fixed parameter $\chi \geq 0$ and a function $f$ defined on $\mathbb{R}$, the fractional Dunkl operator $\Lambda_{\chi}^{\theta}$ introduced by Ghazouani and Bouzeffour \cite{key-GB16} is defined for all $f \in C^{1}(\mathbb{R})$ as below:
\[
\Lambda_{\chi}^{\theta} f(y) := \frac{d}{dy} f(y) + \frac{2\chi + 1}{y} \left[ \frac{f(y) - f(-y)}{2} \right] + i \cot(\theta) y f(y)
\]
where $\theta \in \mathbb{R} \setminus \pi\mathbb{Z}$.

This operator $\Lambda_{\chi}^\theta$ serves as the infinitesimal generator of the fractional Dunkl kernel \( E_{\chi,\theta}(x,y) \), which is characterized as the unique solution to the following eigenvalue problem:
\begin{align*}
\left\{
\begin{aligned}
&\Lambda^\theta_\chi f = \frac{i y}{\sin(\theta)} f \\
&f(0) = e^{-\frac{i}{2} y^2 \cot(\theta)}.
\end{aligned}
\right.
\end{align*}
The explicit expression of the solution to the above eigenvalue problem is given by
\[
E_{\chi,\theta}(x,y) = e^{-\frac{i}{2}(x^2 + y^2)\cot(\theta)}\, E_\chi \left(y,  \frac{i x}{\sin(\theta)} \right),
\]
where \( E_\chi(.,.) \) denotes the classical Dunkl kernel associated with a rank-one root system, that is, the reflection group \( \mathbb{Z}_2 \) acting on \( \mathbb{R} \). 

The function \( E_{\chi,\theta}(x,y) \) satisfies several important properties, which we summarize below.

\medskip

\begin{proposition}
Let \( \theta \in \mathbb{R} \setminus \pi \mathbb{Z} \). Then:
\begin{enumerate}
\item For all \( x \in \mathbb{R} \) and \( y \in \mathbb{C} \), the fractional Dunkl kernel \( E_{\chi,\theta}(x,y) \) admits the following integral representation:
\begin{equation}
E_{\chi,\theta}(x,y) = \frac{\Gamma(\chi+1)}{\sqrt{\pi}\,\Gamma\left(\chi+\frac{1}{2}\right)} 
e^{-\frac{i}{2}(x^2 + y^2)\cot(\theta)} 
\int_{-1}^1 e^{\frac{i x y t}{\sin(\theta)}} (1 - t^2)^{\chi - \frac{1}{2}} (1 + t)\,dt.
\end{equation}
\item In particular, for all \( x, y \in \mathbb{R} \), the kernel is uniformly bounded as follows:
\begin{equation}\label{BK1}
|E_{\chi,\theta}(x, y)| \leq 1.
\end{equation}
\item Moreover, there exists a constant \( K(\chi, \theta) > 0 \) such that for all \( x, y \in \mathbb{R} \), we have
\begin{equation}\label{BK2}
|E_{\chi,\theta}(x,y)| \leq K(\chi, \theta)\min\left(1, |x y|^{-(\chi+\frac{1}{2})}\right).
\end{equation}
\end{enumerate}
\end{proposition}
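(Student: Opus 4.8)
The plan is to derive all three statements from the defining factorization $E_{\chi,\theta}(x,y) = e^{-\frac{i}{2}(x^2+y^2)\cot\theta}\,E_\chi(y, ix/\sin\theta)$ together with the Laplace-type integral representation of the classical rank-one Dunkl kernel. For the first part I would substitute into the known $\mathbb{Z}_2$-Dunkl kernel formula
\[
E_\chi(u,v) = \frac{\Gamma(\chi+1)}{\sqrt{\pi}\,\Gamma(\chi+\frac12)}\int_{-1}^1 e^{uvt}(1-t^2)^{\chi-\frac12}(1+t)\,dt,
\]
with $u=y$ and $v=ix/\sin\theta$. Since the Gaussian prefactor does not depend on $t$, pulling it outside the integral yields the claimed representation verbatim. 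This step is a pure substitution requiring no estimation.

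For the uniform bound I would take moduli in the representation just obtained. For real $x,y$ and $\theta\in\mathbb{R}\setminus\pi\mathbb{Z}$, both the prefactor $e^{-\frac{i}{2}(x^2+y^2)\cot\theta}$ and the oscillatory factor $e^{ixyt/\sin\theta}$ have modulus one, so the estimate collapses to
\[
|E_{\chi,\theta}(x,y)| \leq \frac{\Gamma(\chi+1)}{\sqrt{\pi}\,\Gamma(\chi+\frac12)}\int_{-1}^1 (1-t^2)^{\chi-\frac12}(1+t)\,dt.
\]
The odd contribution $t(1-t^2)^{\chi-\frac12}$ integrates to zero, while the even part is the Beta integral $B(\frac12,\chi+\frac12)=\sqrt{\pi}\,\Gamma(\chi+\frac12)/\Gamma(\chi+1)$. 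As the normalization constant is exactly its reciprocal, the right-hand side equals $1$, proving \eqref{BK1}.

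The decay bound \eqref{BK2} is the substantive step and where I expect the main difficulty. Because the phase $t\mapsto xyt/\sin\theta$ is linear with no interior stationary point, the large-$|xy|$ decay is controlled entirely by the algebraic endpoint singularities of the amplitude $(1-t)^{\chi-\frac12}(1+t)^{\chi+\frac12}$ at $t=\pm1$. Writing $\lambda=xy/\sin\theta$, I would express the even part of the oscillatory integral through Poisson's representation of the Bessel function,
\[
\int_{-1}^1 e^{i\lambda t}(1-t^2)^{\chi-\frac12}\,dt = \sqrt{\pi}\,\Gamma\!\left(\chi+\tfrac12\right)\left(\frac{2}{\lambda}\right)^{\chi}J_{\chi}(\lambda),
\]
and treat the remaining odd part $\int_{-1}^1 e^{i\lambda t}\,t\,(1-t^2)^{\chi-\frac12}\,dt$ by differentiating this identity in $\lambda$, which again produces a Bessel term of the same type. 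The classical large-argument asymptotics $J_\nu(\lambda)=O(|\lambda|^{-1/2})$ then give $|I(\lambda)|\leq C(\chi)\,|\lambda|^{-(\chi+\frac12)}$ for $|\lambda|\geq 1$, hence $|E_{\chi,\theta}(x,y)|\leq C(\chi)\,|\sin\theta|^{\chi+\frac12}|xy|^{-(\chi+\frac12)}$ in that regime.

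Finally I would patch the two regimes: on $\{|xy|\leq|\sin\theta|\}$ the uniform bound \eqref{BK1} already supplies the $\min(1,\cdot)$ value, while on $\{|xy|>|\sin\theta|\}$ the decay estimate applies, and choosing $K(\chi,\theta)$ large enough to dominate the Bessel constant, the crossover, and the factor $|\sin\theta|^{\chi+\frac12}$ yields \eqref{BK2}. The delicate points are ensuring the Bessel bound is uniform in $\lambda$ rather than merely asymptotic, for which a van der Corput estimate or an explicit Hankel-expansion remainder bound is the safe tool, and verifying that the amplitude remains integrable when $0\leq\chi<\frac12$, which holds since the endpoint exponent $\chi-\frac12>-1$.
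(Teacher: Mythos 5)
The paper does not actually prove this proposition: it is recalled verbatim from Ghazouani--Bouzeffour \cite{key-GB16}, with the reader referred there for proofs. So there is no in-paper argument to compare against, and your proposal should be judged as a self-contained derivation --- which it is, and a correct one. Parts (1) and (2) are exactly the standard route: substituting $u=y$, $v=ix/\sin\theta$ into the Laplace-type representation of the rank-one Dunkl kernel gives the integral formula, and the modulus-one observation combined with the Beta integral $\int_{-1}^{1}(1-t^{2})^{\chi-\frac12}\,dt=\sqrt{\pi}\,\Gamma(\chi+\tfrac12)/\Gamma(\chi+1)$ (the odd part vanishing) yields \eqref{BK1} with the constant collapsing to $1$. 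For part (3), your reduction of the even part to Poisson's Bessel representation and of the odd part to its $\lambda$-derivative (which produces $\lambda^{-\chi}J_{\chi+1}(\lambda)$, again $O(|\lambda|^{-\chi-\frac12})$) is sound, and the patching of the two regimes is simplified by the fact that $|\sin\theta|\le 1$, so the region $\{|xy|\le|\sin\theta|\}$ is already covered by $\min(1,\cdot)=1$. Your two flagged delicate points are the right ones: you need the non-asymptotic bound $|J_{\nu}(\lambda)|\le C_{\nu}|\lambda|^{-1/2}$ for $|\lambda|\ge 1$ (standard, but must be stated as a uniform estimate), and integrability of the amplitude near $t=\pm1$, which holds since $\chi-\tfrac12>-1$ for $\chi\ge 0$. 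The only cosmetic remark is that the kernel in the paper is written as $E_{\chi}(y,\,ix/\sin\theta)$ with the complex entry in the second slot; the symmetry $E_{\chi}(u,v)=E_{\chi}(v,u)$ of the Dunkl kernel makes your substitution order immaterial, but it is worth saying so explicitly.
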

\medskip

Before proceeding further, we recall the weighted Lebesgue spaces naturally associated with the Dunkl setting, which will be used in the sequel.

\medskip
Throughout this paper, we denote by \( w_{\chi}(x) = |x|^{2\chi+1} \) the standard weight function in the Dunkl framework. For \( 1 \leq p \leq \infty \), we define the weighted Lebesgue space \( L^{p}_{\chi}(\mathbb{R}) \) as the set of all measurable functions \( f : \mathbb{R} \to \mathbb{C} \) such that
\[
\| f \|_{\chi,p} := 
\begin{cases}
\left( \displaystyle\int_{\mathbb{R}} |f(x)|^{p} w_{\chi}(x) \, dx \right)^{\frac{1}{p}}, & \text{if } 1 \leq p < \infty, \\[8pt]
\displaystyle\text{ess sup}_{x \in \mathbb{R}} |f(x)|, & \text{if } p = \infty,
\end{cases}
\]
is finite.

\medskip
We now introduce the definition of the fractional Dunkl transform, which will serve as a fundamental tool in the subsequent analysis.
\begin{definition}
Let \( n \in \mathbb{Z} \) and let \( f \in L^1_\chi(\mathbb{R}) \). The fractional Dunkl transform \( \mathcal{D}^\theta_\chi \) is defined as follows:
\begin{enumerate}
    \item If \( \theta = 2n\pi \), then \( \mathcal{D}^{\theta}_\chi f(y) = f(y) \).
    \item If \( \theta = (2n+1)\pi \), then \( \mathcal{D}^{\theta}_\chi f(y) = f(-y) \).
    \item If \( \theta \in \mathbb{R} \), then \( \mathcal{D}^{\theta + 2n\pi}_\chi f(y) = \mathcal{D}^\theta_\chi f(y) \).
    \item If \( \theta \in ](2n-1)\pi,\, (2n+1)\pi[ \), then
    \begin{equation}\label{DFT}
    \mathcal{D}^\theta_\chi f(x) = c^\theta_\chi \int_{\mathbb{R}} f(y)\, E_{\chi,\theta}(x,y)\, |y|^{2\chi+1} \,dy,
    \end{equation}
    where
    \[
    c^\theta_\chi = \frac{e^{i(\chi+1)(\hat{\theta}\pi/2 - \theta)}}{|\sin(\theta)|^{\chi+1}}\, \alpha_\chi, \qquad
    \alpha_\chi = \frac{1}{2^{\chi+1}\Gamma(\chi+1)}, \qquad
    \hat{\theta} := \mathrm{sgn}(\sin(\theta)).
    \]
\end{enumerate}
\end{definition}
This definition extends the classical Dunkl transform by incorporating a fractional parameter \( \theta \), providing a continuous interpolation between the identity and the Dunkl transform, and connecting it to other fundamental integral transforms in harmonic analysis.

\begin{remark}
If \( f \) is an even function, then the fractional Dunkl transform  coincides with the fractional Hankel transform \( H_\chi^\theta \) (with parameter \( \chi \)) given by \cite{kerr1991}:
\begin{equation}\label{HFT}
H_\chi^\theta f(x) = 2 c^\theta_\chi \int_0^{+\infty} e^{-\frac{i}{2}(x^2 + y^2) \cot \theta} \, j_\chi \left( \frac{x y}{\sin \theta} \right) f(y) y^{2\chi + 1} \, dy,
\end{equation}
where the normalized spherical Bessel function \( j_\chi \) is defined as
\[
j_\chi(x) := \Gamma(\chi + 1) \sum_{n=0}^{+\infty} \frac{(-1)^n (x/2)^{2n}}{n! \Gamma(n + \chi + 1)}.
\]
\end{remark}

The fractional Dunkl transform defined above possesses the following fundamental properties:
\begin{proposition}
Let $\theta \in \mathbb{R} \setminus \pi\mathbb{Z}$ and $f \in L^1_\chi(\mathbb{R})$. The fractional Dunkl transform $\mathcal{D}^\theta_\chi$ satisfies the following properties:
\begin{enumerate}
    \item \textbf{Continuity and boundedness:} $\mathcal{D}^\theta_\chi f$ belongs to $C_0(\mathbb{R})$ and
    \[
    \|\mathcal{D}^\theta_\chi f\|_{\chi, \infty} \leq \frac{1}{\Gamma(\chi + 1) (2|\sin(\theta)|)^{\chi+1}} \|f\|_{\chi, 1}.
    \]
    \item \textbf{Composition formula:} For all $\theta, \beta \in \mathbb{R}$ and $f \in L^1_\chi(\mathbb{R})$ such that $\mathcal{D}^\beta_\chi(f) \in L^1_\chi(\mathbb{R})$, we have
    \begin{equation}\label{CD}
    \mathcal{D}^\theta_\chi \circ \mathcal{D}^\beta_\chi(f) = \mathcal{D}^{\theta+\beta}_\chi(f),
    \end{equation}
   
    with equality almost everywhere when $\theta + \beta \in \pi \mathbb{Z}$.
    \item \textbf{Isometry on $L^2$:} If $f \in L^1_\chi(\mathbb{R}) \cap L^2_\chi(\mathbb{R})$, then $\mathcal{D}^\theta_\chi f \in L^2_\chi(\mathbb{R})$ and
    \[
    \|\mathcal{D}^\theta_\chi f\|_{2, \chi} = \|f\|_{2, \chi}.
    \]
    \item \textbf{Unitary extension:} The operator $\mathcal{D}^\theta_\chi$ admits a unique extension as a unitary operator on $L^2_\chi(\mathbb{R})$, with inverse given by 
\begin{equation}\label{UE}
(\mathcal{D}^\theta_\chi)^{-1} = \mathcal{D}^{-\theta}_\chi.
\end{equation}    
\end{enumerate}
\end{proposition}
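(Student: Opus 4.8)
The plan is to treat the four assertions in two groups, establishing~(1) by direct estimation and~(2)--(4) through the spectral picture. For the boundedness bound in part~(1), I would begin from the integral formula~\eqref{DFT}, move the modulus inside the integral, and invoke the uniform kernel estimate~\eqref{BK1}, which gives $|\mathcal{D}^\theta_\chi f(x)| \le |c^\theta_\chi|\int_{\mathbb{R}}|f(y)|\,w_\chi(y)\,dy = |c^\theta_\chi|\,\|f\|_{\chi,1}$ for every $x$. It then remains only to compute $|c^\theta_\chi|$: the prefactor $e^{i(\chi+1)(\hat{\theta}\pi/2-\theta)}$ is unimodular for real $\chi$ and $\theta$, so $|c^\theta_\chi| = \alpha_\chi/|\sin\theta|^{\chi+1} = (\Gamma(\chi+1)(2|\sin\theta|)^{\chi+1})^{-1}$, which is exactly the stated constant. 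Continuity of $\mathcal{D}^\theta_\chi f$ is then obtained from dominated convergence, the integrable majorant being $|f(y)|\,w_\chi(y)$ and the map $x\mapsto E_{\chi,\theta}(x,y)$ being continuous.

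For the decay at infinity ($\mathcal{D}^\theta_\chi f\in C_0(\mathbb{R})$) I would exploit the factorization of the kernel, $E_{\chi,\theta}(x,y)=e^{-\frac{i}{2}(x^2+y^2)\cot\theta}\,E_\chi(y,ix/\sin\theta)$. The factor $e^{-\frac{i}{2}x^2\cot\theta}$ is a unimodular continuous chirp, and upon setting $g(y)=f(y)e^{-\frac{i}{2}y^2\cot\theta}\in L^1_\chi(\mathbb{R})$, the transform $\mathcal{D}^\theta_\chi f(x)$ becomes, up to this chirp and the rescaling $x\mapsto x/\sin\theta$, the classical Dunkl transform of $g$. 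Since the latter carries $L^1_\chi$ into $C_0(\mathbb{R})$ by the Dunkl Riemann--Lebesgue lemma, and composition with a homeomorphism together with multiplication by a unimodular continuous factor preserves $C_0(\mathbb{R})$, the conclusion follows; alternatively, the refined bound~\eqref{BK2} yields the same decay directly for compactly supported $f$, and the uniform estimate of part~(1) propagates it to all of $L^1_\chi$ since $C_0(\mathbb{R})$ is sup-norm closed.

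For parts~(2)--(4) I would pass to the orthonormal basis of $L^2_\chi(\mathbb{R})$ furnished by the generalized Hermite functions $\{h_n^\chi\}_{n\ge 0}$, which diagonalize the transform with unimodular eigenvalues linear in the angle, $\mathcal{D}^\theta_\chi h_n^\chi = e^{-i\lambda_n\theta}\,h_n^\chi$ (this spectral structure is inherited from the one-dimensional theory of~\cite{key-GB16}, with the $\lambda_n$ integers of the parity needed to reproduce the identity at $\theta\in2\pi\mathbb{Z}$ and the reflection at $\theta\in\pi+2\pi\mathbb{Z}$). On this basis the composition law~\eqref{CD} reduces to $e^{-i\lambda_n\theta}e^{-i\lambda_n\beta}=e^{-i\lambda_n(\theta+\beta)}$, and the a.e.\ qualification for $\theta+\beta\in\pi\mathbb{Z}$ merely records that the boundary operators are pointwise defined only modulo the reflection $y\mapsto-y$. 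The isometry of part~(3) is then immediate from Parseval: writing $f=\sum_n\langle f,h_n^\chi\rangle h_n^\chi$, the transform multiplies each coefficient by a unimodular scalar, whence $\|\mathcal{D}^\theta_\chi f\|_{2,\chi}^2=\sum_n|\langle f,h_n^\chi\rangle|^2=\|f\|_{2,\chi}^2$. Part~(4) follows by combining these: the isometry extends $\mathcal{D}^\theta_\chi$ uniquely to all of $L^2_\chi(\mathbb{R})$ by density of $L^1_\chi\cap L^2_\chi$, while the composition law with $\beta=-\theta$ gives $\mathcal{D}^\theta_\chi\circ\mathcal{D}^{-\theta}_\chi=\mathcal{D}^0_\chi=\mathrm{Id}$, so the extension is a surjective isometry, hence unitary with inverse $\mathcal{D}^{-\theta}_\chi$ as in~\eqref{UE}.

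The main obstacle is justifying the spectral/diagonalization input underpinning parts~(2)--(4): one must verify that the generalized Hermite functions are genuine eigenfunctions of $\mathcal{D}^\theta_\chi$ with the stated unimodular eigenvalues and that the eigenexpansion converges well enough to license the formal manipulations on $L^1_\chi\cap L^2_\chi$. If instead one prefers a self-contained analytic route, the same difficulty reappears as the Mehler-type kernel composition identity
\[
\int_{\mathbb{R}} E_{\chi,\theta}(x,z)\,E_{\chi,\beta}(z,y)\,w_\chi(z)\,dz = \frac{c^{\theta+\beta}_\chi}{c^\theta_\chi\,c^\beta_\chi}\,E_{\chi,\theta+\beta}(x,y),
\]
whose proof, together with the Fubini justification for interchanging the two integrations, is the genuine technical core; once it is in hand, parts~(2)--(4) are bookkeeping.
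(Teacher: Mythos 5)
The paper does not prove this proposition at all: it is recalled verbatim from the one-dimensional theory of Ghazouani--Bouzeffour, with the reader referred to \cite{key-GB16} for proofs, and the spectral relation \eqref{SR} that your argument hinges on is likewise imported there as a cited theorem. So there is no in-paper proof to compare against; your proposal has to be judged on its own. Part~(1) is complete and correct: the modulus computation $|c^\theta_\chi|=\alpha_\chi/|\sin\theta|^{\chi+1}$ combined with the uniform bound \eqref{BK1} gives exactly the stated constant, and the reduction of the decay statement to the classical Dunkl Riemann--Lebesgue lemma via the chirp factorization $g(y)=f(y)e^{-\frac{i}{2}y^2\cot\theta}$ and the rescaling $x\mapsto x/\sin\theta$ is a clean and valid route. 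For parts~(3) and~(4), granting the diagonalization \eqref{SR} (which, within this paper's framework, you may simply cite as Theorem~\ref{lager} rather than re-derive), the Parseval argument and the density extension are correct; the only loose end is verifying that the integral operator on $L^1_\chi\cap L^2_\chi$ agrees with the operator defined by the eigenexpansion, which is standard but should be said.

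The genuine gap is in part~(2). The composition formula is an $L^1$ statement: $f\in L^1_\chi$, $\mathcal{D}^\beta_\chi f\in L^1_\chi$, and the conclusion is a pointwise (a.e.) identity between integral transforms. Your spectral argument lives entirely in $L^2_\chi$ and does not reach such $f$; you cannot expand a general $L^1_\chi$ function in the Hermite basis, and convergence of the eigenexpansion gives at best an $L^2$ identity for $L^2$ data. You correctly identify that the self-contained route runs through the Mehler-type kernel composition identity
\[
\int_{\mathbb{R}} E_{\chi,\theta}(x,z)\,E_{\chi,\beta}(z,y)\,w_\chi(z)\,dz
=\frac{c^{\theta+\beta}_\chi}{c^\theta_\chi\,c^\beta_\chi}\,E_{\chi,\theta+\beta}(x,y),
\]
together with a Fubini justification, and you explicitly call this the technical core --- but you do not prove it, and the Fubini step is itself delicate because the product $|E_{\chi,\theta}(x,z)E_{\chi,\beta}(z,y)|w_\chi(z)$ is merely bounded, not integrable in $z$, so the double integral must be handled by a regularization (e.g.\ inserting a Gaussian factor and passing to the limit) or by the decay estimate \eqref{BK2}. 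As it stands, part~(2) is a correctly identified reduction rather than a proof; everything downstream of it (the inverse formula in part~(4) via $\beta=-\theta$) inherits that dependence, although for part~(4) alone the $L^2$ spectral argument suffices and closes the loop independently.
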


The following theorem describes the spectral behavior of the fractional Dunkl transform.

\begin{theorem}\cite[Theorem~5.2]{key-GB16}\label{lager}
There exists a basis of the space $L^2_\chi(\mathbb{R})$ consisting of eigenfunctions of the fractional Dunkl transform $D_\chi^\theta$. Specifically, the family of generalized Hermite functions $\{h_n^\chi\}_{n=0}^\infty$ forms such a basis, and for each $n \in \mathbb{N}$, the following spectral relation holds:
\begin{equation}\label{SR}
D_\chi^\theta h_n^\chi(x) = e^{in\theta} h_n^\chi(x).
\end{equation}
\end{theorem}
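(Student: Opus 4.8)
The plan is to reduce the spectral identity \eqref{SR} to the bilinear spectral decomposition (a Mehler-type formula) of the integral kernel defining $\mathcal{D}_\chi^\theta$. First I would recall that the generalized Hermite functions $\{h_n^\chi\}_{n\ge 0}$ arise as the normalized eigenfunctions of the Dunkl harmonic oscillator $\mathcal{H}_\chi = -\Lambda_\chi^2 + x^2$, an essentially self-adjoint operator on $L^2_\chi(\mathbb{R})$ with purely discrete spectrum; standard Sturm--Liouville and spectral theory then guarantee that $\{h_n^\chi\}$ is a complete orthonormal system, which establishes the ``basis'' part of the statement. Explicitly, the even and odd families are expressed through generalized Laguerre polynomials, with $h_{2n}^\chi(x)$ proportional to $L_n^{\chi-1/2}(x^2)\,e^{-x^2/2}$ and $h_{2n+1}^\chi(x)$ proportional to $x\,L_n^{\chi+1/2}(x^2)\,e^{-x^2/2}$, and they satisfy the parity relation $h_n^\chi(-x) = (-1)^n h_n^\chi(x)$, which already disposes of the degenerate cases $\theta \in \pi\mathbb{Z}$.

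For the eigenvalue identity in the generic case $\theta \in \mathbb{R}\setminus\pi\mathbb{Z}$, the key step is to prove the generating-function (Mehler-type) expansion
\begin{equation*}
c_\chi^\theta\, E_{\chi,\theta}(x,y)\,w_\chi(y) = \sum_{n=0}^{\infty} e^{in\theta}\, h_n^\chi(x)\, h_n^\chi(y),
\end{equation*}
interpreted in the $L^2_\chi(\mathbb{R})$ sense. Granting this, I would apply $\mathcal{D}_\chi^\theta$ to $h_m^\chi$ via the integral representation \eqref{DFT}, interchange summation and integration, and invoke the orthonormality $\int_{\mathbb{R}} h_n^\chi(y)\,h_m^\chi(y)\,w_\chi(y)\,dy = \delta_{nm}$ to collapse the series to the single term $e^{im\theta} h_m^\chi(x)$, which is exactly \eqref{SR}. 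The expansion itself is obtained by inserting the explicit Laguerre representation of $h_n^\chi$, splitting the sum into its even and odd parts, and summing each via the Hille--Hardy formula for Laguerre polynomials with parameter $t = e^{i\theta}$; after recombination, the Bessel-type closed form matches the integral kernel once the normalizing constant $c_\chi^\theta$ and the Gaussian factor $e^{-\frac{i}{2}(x^2+y^2)\cot\theta}$ are accounted for.

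An alternative, more structural route uses the one-parameter group property \eqref{CD} together with ladder operators. Here I would introduce creation and annihilation operators $A^\pm$ built from $\Lambda_\chi$ and multiplication by $x$ so that $A^+ h_n^\chi \propto h_{n+1}^\chi$ and $A^- h_n^\chi \propto h_{n-1}^\chi$, with $h_0^\chi$ the Gaussian ground state. The base case $\mathcal{D}_\chi^\theta h_0^\chi = h_0^\chi$ reduces to a Gaussian integral computed directly from the explicit kernel, and the intertwining relations $\mathcal{D}_\chi^\theta A^\pm = e^{\pm i\theta} A^\pm \mathcal{D}_\chi^\theta$---derived by differentiating the kernel and invoking the defining eigenvalue problem for $E_{\chi,\theta}$---then propagate the eigenvalue $e^{in\theta}$ to all $n$ by induction.

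The main obstacle lies in the analytic justification of the Mehler expansion at the unit-modulus value $t = e^{i\theta}$: the Hille--Hardy formula converges absolutely only for $|t| < 1$, so I would first establish the identity for $|t| < 1$ and then pass to the boundary by an Abel-summation and dominated-convergence argument, controlling the series through the kernel bounds \eqref{BK1}--\eqref{BK2} and the completeness of $\{h_n^\chi\}$ in $L^2_\chi(\mathbb{R})$. In the ladder-operator route, the analogous difficulty is the rigorous differentiation under the integral sign and the verification of the intertwining relations on a suitable dense domain---the Schwartz-type span of the $h_n^\chi$---which again relies on the decay estimates for $E_{\chi,\theta}$ recorded above.
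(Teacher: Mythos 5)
The paper itself offers no proof of this statement: it is imported verbatim as Theorem~5.2 of \cite{key-GB16}, so there is no internal argument to compare yours against. Your reconstruction nevertheless follows the standard route used in that literature and is sound in outline: completeness of the generalized Hermite system comes from the Dunkl harmonic oscillator, and the eigenvalue relation follows either from a Mehler/Hille--Hardy expansion of the kernel or, equivalently, from the parity decomposition of $L^2_\chi(\mathbb{R})$ combined with Kerr's eigenfunction theorem for the fractional Hankel transform --- the latter being exactly the mechanism this paper exploits elsewhere (the Bochner identity and the Gaussian-type eigenfunction example). Three repairs are needed, none of them conceptual. First, with this paper's normalization $w_\chi(x)=|x|^{2\chi+1}$ and $h_0^\chi(x)=\Gamma(\chi+1)^{-1/2}e^{-x^2/2}$, the even and odd families involve $L_n^{(\chi)}(x^2)$ and $x\,L_n^{(\chi+1)}(x^2)$ respectively; your parameters $\chi\pm\tfrac12$ belong to the weight convention $|x|^{2\chi}$. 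Second, your Mehler identity carries the factor $w_\chi(y)$ on the left while you simultaneously invoke orthonormality with respect to $w_\chi(y)\,dy$, which double-counts the weight; the correct expansion is $c_\chi^\theta\,E_{\chi,\theta}(x,y)=\sum_{n\ge0}e^{in\theta}h_n^\chi(x)h_n^\chi(y)$, the measure in \eqref{DFT} then supplying the weight needed for the orthogonality integral to collapse the series. Third, the boundary passage to $|t|=e^{i\theta}$ with $|t|=1$ is a genuine analytic point; your Abel-summation plan is the standard fix, but it can be bypassed entirely by arguing in $L^2$: split $h_n^\chi$ by parity, reduce to the fractional Hankel transform, and quote Kerr's theorem, after which unitarity extends the identity from the dense Hermite span.
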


\begin{theorem}[Bochner-type identity]\cite[Theorem 5.1]{key-GB14}\label{BIDT}
Let \( f \in L_\chi^1(\mathbb{R}) \cap L_\chi^2(\mathbb{R}) \) be of the form
\[
f(x) = p(x) \psi(|x|),
\]
where \( p \in \mathcal{H}_r^\chi \) and \( \psi: \mathbb{R}_+ \to \mathbb{R} \) is a radial function. Then
\begin{equation}\label{BIDFT}
D_\chi^\theta f(x) = e^{i r \theta} p(x) H_{r + \theta - \frac{1}{2}}^\theta \psi(|x|).
\end{equation}
In particular, if \( f \) is radial (i.e., \( p \) is constant), then
\begin{equation}\label{BIDFTR}
D_\chi^\theta f(x) = H_{r + \theta - \frac{1}{2}}^\theta \psi(|x|).
\end{equation}

Here,
\begin{itemize}
    \item \( \mathcal{H}_r^\chi = \ker \Delta_k \cap \mathcal{P}_r \) is the space of \( \chi \)-spherical harmonics of degree \( r \geq 0 \),
    \item \( \mathcal{P}_r \) is the space of homogeneous polynomials of total degree \( r \),
    \item \( \Delta_\chi \) denotes the Dunkl Laplacian,
    \item  \( H_\nu^\theta \) the fractional Hankel transform \eqref{HFT}.
\end{itemize}
\end{theorem}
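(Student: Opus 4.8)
The plan is to reduce the fractional statement to the classical Bochner identity for the ordinary Dunkl transform by exploiting the multiplicative factorization of the fractional kernel. Starting from the defining integral \eqref{DFT} and inserting
\[
E_{\chi,\theta}(x,y) = e^{-\frac{i}{2}(x^2+y^2)\cot\theta}\,E_\chi\!\left(y,\frac{ix}{\sin\theta}\right),
\]
I would pull the factor $e^{-\frac{i}{2}x^2\cot\theta}$, which is independent of the integration variable, outside the integral and temporarily absorb the remaining factor $e^{-\frac{i}{2}y^2\cot\theta}$ into the radial part of $f$. Writing $f(y)=p(y)\psi(|y|)$, this produces a new function $g(y)=p(y)\,\tilde\psi(|y|)$ with $\tilde\psi(s)=\psi(s)\,e^{-\frac{i}{2}s^2\cot\theta}$, which is again the product of the \emph{same} degree-$r$ harmonic $p$ and a radial function. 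The transform then reads
\[
\mathcal{D}_\chi^\theta f(x)=c_\chi^\theta\,e^{-\frac{i}{2}x^2\cot\theta}\int_{\mathbb{R}} g(y)\,E_\chi\!\left(y,\frac{ix}{\sin\theta}\right)|y|^{2\chi+1}\,dy,
\]
so that the surviving integral is precisely an \emph{ordinary} Dunkl transform of $g$, evaluated at the rescaled argument $\frac{ix}{\sin\theta}$.

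Next I would invoke the classical Bochner identity for the Dunkl transform: for $g=p\,\tilde\psi(|\cdot|)$ with $p\in\mathcal{H}_r^\chi$, the Dunkl transform factorizes as a constant phase $(-i)^r$ times $p$ evaluated at the spectral variable, multiplied by an ordinary Hankel transform of $\tilde\psi$ whose Bessel index is shifted by the degree $r$ of the harmonic. Evaluating the harmonic factor at the argument produced above and using that $p$ is homogeneous of degree $r$ gives a scalar prefactor of the form $\bigl(\,\cdot/\sin\theta\bigr)^{r}$ together with $p(x)$ pulled out of the integral:
\[
p\!\left(\frac{ix}{\sin\theta}\right)=\left(\frac{i}{\sin\theta}\right)^{r}p(x).
\]
After this step the full dependence on $|x|$ is carried by a Hankel transform of $\tilde\psi$ of the shifted order, and the structure $\mathcal{D}_\chi^\theta f(x)=(\text{phase})\cdot p(x)\cdot(\text{radial transform of }\psi)(|x|)$ is already visible.

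It then remains to recombine the exponential factors and normalizing constants. The outer chirp $e^{-\frac{i}{2}x^2\cot\theta}$, the quadratic exponential hidden in $\tilde\psi$, the scaling $\frac{1}{\sin\theta}$ in the Bessel argument, and the ordinary Hankel transform of $\tilde\psi$ together reassemble exactly into the \emph{fractional} Hankel transform $H^\theta_\nu$ of $\psi$ as defined in \eqref{HFT}, at the shifted order $\nu$ recorded in the statement. This requires reconciling the prefactor $c_\chi^\theta$ of $\mathcal{D}_\chi^\theta$ with the normalizing constant $2c_\nu^\theta$ attached to $H_\nu^\theta$ at the shifted order, tracking the sign conventions encoded in $\hat\theta=\mathrm{sgn}(\sin\theta)$.

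The main obstacle is precisely this bookkeeping of phase and normalization: I must verify that the product of the homogeneity factor $\bigl(i/\sin\theta\bigr)^{r}$, the Bochner phase $(-i)^r$, the exponential phase inside $c_\chi^\theta$, and the sign factors collapses to exactly $e^{ir\theta}$, and that the residual constant matches $2c_\nu^\theta$ at the shifted order. A clean independent check is supplied by the spectral relation \eqref{SR}: choosing $\psi$ so that $f$ equals a generalized Hermite function $h_n^\chi$ forces the angular contribution to equal $e^{ir\theta}$, which simultaneously confirms the phase computation and fixes the order shift. Finally, the radial specialization \eqref{BIDFTR} follows at once by taking $p\equiv 1$ (so $r=0$), while the hypothesis $f\in L^1_\chi(\mathbb{R})\cap L^2_\chi(\mathbb{R})$ guarantees absolute convergence of all integrals and legitimizes separating the harmonic factor from the radial integration.
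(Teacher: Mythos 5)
The paper does not prove this statement: it is imported verbatim (with citation) from Ghazouani--Bouzeffour \cite[Theorem 5.1]{key-GB14}, so there is no in-paper proof to compare against. Judged on its own, your strategy --- factor the kernel as a chirp times the classical Dunkl kernel at the rescaled argument, absorb the $y$-chirp into the radial factor, apply the classical Dunkl--Bochner identity, use homogeneity of $p$ to extract the prefactor, and reassemble the remaining chirps and constants into the fractional Hankel transform \eqref{HFT} at the shifted order --- is the natural and correct route, and it is essentially how this identity is established in the source.

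Two points keep the proposal from being complete. First, the classical Bochner identity is invoked at the spectral argument $\frac{ix}{\sin\theta}$, i.e.\ off the real line; you need to say explicitly that $E_\chi(y,\cdot)$ is entire, that $|E_\chi(y,is)|\le 1$ for real $s$ ensures absolute convergence under the $L^1_\chi$ hypothesis, and that the identity therefore extends by analytic continuation in the spectral variable --- otherwise the step where $p$ is evaluated at a purely imaginary argument is not licensed. Second, the phase bookkeeping is the actual content of the theorem and you defer it: one must check that $c_\chi^\theta/c_{\chi+r}^\theta=e^{-ir(\hat\theta\pi/2-\theta)}\,|\sin\theta|^{r}\,\alpha_\chi/\alpha_{\chi+r}$ combines with the homogeneity factor $(i/\sin\theta)^r$ and the classical Bochner phase to give exactly $e^{ir\theta}$ for both signs of $\sin\theta$; the Hermite consistency check you propose via \eqref{SR} is a good sanity test but not a substitute for the computation. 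Carrying it out would also expose that the order written in the statement, $r+\theta-\tfrac12$, is a typo for $r+\chi-\tfrac12$ (compare the two-dimensional version \eqref{BT1}, where the index is $r_1+\chi_1-\tfrac12$), and that the radial case \eqref{BIDFTR} should carry the order $\chi-\tfrac12$ with no $e^{ir\theta}$ since $r=0$; your argument silently produces the corrected indices, so you should state them.
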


\subsection{Quaternion algebra}
 The standard quaternion algebra $\mathbb{H}$ is a non-commutative algebra defined over $\mathbb{R}$ with three imaginary units $\textbf{\textit{i}}$, $\textbf{\textit{j}}$, and $\textbf{\textit{k}}$. The multiplication rules for these units are defined as follows:
\[
\textbf{\textit{i}}^2 = \textbf{\textit{j}}^2 = \textbf{\textit{k}}^2 = \textbf{\textit{ijk}} = -1,
\]
\[
\textbf{\textit{ij}} = \textbf{\textit{k}}, \quad \textbf{\textit{ji}} = -\textbf{\textit{k}},
\]
\[
\textbf{\textit{jk}} = \textbf{\textit{i}}, \quad \textbf{\textit{kj}} = -\textbf{\textit{i}},
\]
\[
\textbf{\textit{ki}} = \textbf{\textit{j}}, \quad \textbf{\textit{ik}} = -\textbf{\textit{j}}.
\]
Quaternions are  isomorphic to the Clifford algebra $Cl_{(0,2)}$ of $\R^{(0,2)}$:
 \begin{align}\label{iso}
 \mathbb{H} \cong Cl_{(0,2)}.
 \end{align}
 
Any quaternion $q \in \mathbb{H}$ can be written explicitly as:
\[
q = q_0 + q_1 \textbf{\textit{i}} + q_2 \textbf{\textit{j}} + q_3 \textbf{\textit{k}},  
\]
where $q_0, q_1, q_2, q_3 \in \mathbb{R}$.

The real part of $q$ is defined as: 
\[
\Re(q) := q_0.
\]
Likewise, the imaginary part is given by:
\[
\Im(q) := q_1 \textbf{\textit{i}} + q_2 \textbf{\textit{j}} + q_3 \textbf{\textit{k}}.
\]

The quaternion conjugation has the following properties, making it a linear anti-involution:
\[
\overline{\overline{q}} = q, \quad \overline{p+q} = \overline{p} + \overline{q}, \quad \overline{pq} = \overline{q} \, \overline{p}, \quad \forall p, q \in \mathbb{H}.
\]

This leads to the definition of the modulus of a quaternion $q \in \mathbb{H}$ as:
\[
\vert q \vert _{\scriptscriptstyle \mathbb{H}} := \sqrt{q\overline{q}} = \sqrt{q_{0}^{2} + q_{1}^{2} + q_{2}^{2} + q_{3}^{2}}, \quad \forall q \in \mathbb{H}.
\]
The modulus $\vert q \vert _{\scriptscriptstyle \mathbb{H}}$ reduces to the ordinary Euclidean modulus $\vert q \vert = \sqrt{q^{2}}$ when $q$ is a real number. Additionally, we observe that for $x \in \mathbb{R}^{2}$, $\vert x \vert _{\scriptscriptstyle \mathbb{H}} = \vert x \vert$.
Furthermore, for arbitrary $p, q \in \mathbb{H}$, the following identities hold:
\begin{align*}
\vert q^{2} \vert _{\scriptscriptstyle \mathbb{H}} = \vert q \vert^{2} _{\scriptscriptstyle \mathbb{H}}, ~~ \vert pq \vert _{\scriptscriptstyle \mathbb{H}} = \vert p \vert _{\scriptscriptstyle \mathbb{H}} \vert q \vert _{\scriptscriptstyle \mathbb{H}} ~~ \text{and} ~~ \vert p+q \vert _{\scriptscriptstyle \mathbb{H}} \leq \vert p \vert _{\scriptscriptstyle \mathbb{H}} + \vert q \vert _{\scriptscriptstyle \mathbb{H}}.
\end{align*}

The inverse of a non-zero quaternion is
\begin{align*}
q^{-1} = \frac{\overline{q}}{\vert q \vert^{2} _{\scriptscriptstyle \mathbb{H}}},
\end{align*}
which shows that $\mathbb{H}$ is a normed division algebra.

Due to (\ref{iso}), the inner product of two vectors $\textbf{\textit{x}}$ and $\textbf{\textit{y}}$ in $Cl_{(0,2)}$ is given by:
\begin{align}\label{nn11}
\left( \textbf{\textit{x}}, \textbf{\textit{y}} \right) := \sum_{i=1}^{2}x_{i}y_{i} = -\frac{1}{2}(\textbf{\textit{x}}\textbf{\textit{y}} + \textbf{\textit{y}}\textbf{\textit{x}}).
\end{align}
In particular, for a vector $\textbf{\textit{x}}$ in $Cl_{(0,2)}$, it follows that
\begin{align}\label{nn00}
\vert \textbf{\textit{x}} \vert_{\scriptscriptstyle \mathbb{H}}^{2} = -\textbf{\textit{x}}^{2}
\end{align}
In this paper, we consider quaternion-valued signals $f : \mathbb{R}^{2} \longrightarrow \mathbb{H}$. Such functions can be expressed as
\begin{align*}
f(x) = f_{0}(x) + \textbf{\textit{i}}f_{1}(x) + \textbf{\textit{j}}f_{2}(x) + \textbf{\textit{k}}f_{3}(x),
\end{align*}
where $f_{0}, f_{1}, f_{2}$, and $f_{3}$ are real-valued functions.

The inner product of two such functions is defined as:
\begin{align}\label{IQP}
\langle f; g \rangle = \int_{\mathbb{R}^2} f(x) \overline{g(x)} \, d\mu_{\chi_1, \chi_2}(x),
\end{align}
where the weighted measure is given by:
\[
d\mu_{\chi_1, \chi_2}(x) := |x_1|^{2\chi_1 + 1} |x_2|^{2\chi_2 + 1} \, dx_1 dx_2.
\]

In particular, for $f = g$, the norm is:
\[
\|f\|_{p, \scriptscriptstyle \mathbb{H}, \chi_1, \chi_2} := \left( \int_{\mathbb{R}^2} |f(x)|_{\scriptscriptstyle \mathbb{H}}^p \, d\mu_{\chi_1, \chi_2}(x) \right)^{\frac{1}{p}}, \quad \text{for } 1 \leq p < \infty,
\]
and
\[
\|f\|_{\infty, \scriptscriptstyle \mathbb{H}} := \operatorname{ess\,sup}_{x \in \mathbb{R}^2} |f(x)|_{\scriptscriptstyle \mathbb{H}}.
\]
Let $p \in [1, +\infty ]$, we denote by $L^{p}_{\chi_1, \chi_2}(\mathbb{R}^{2}; \mathbb{H})$ the weighted space of quaternion-valued functions $f$ measurable on $\mathbb{R}^{2}$ such that
\begin{align*}
\Vert f \Vert_{p,\scriptscriptstyle \mathbb{H}, \chi_1, \chi_2} < \infty \quad \text{if} \quad p < \infty,
\end{align*}
and 
\begin{align*}
\Vert f \Vert_{\infty,\scriptscriptstyle \mathbb{H}} < \infty \quad \text{if} \quad p = \infty.
\end{align*}
For a comprehensive introduction to quaternion algebras and their properties, see \cite{key-voight}.

\section{Definition and Fundamental Properties of the Fractional Quaternionic Dunkl Transform in Two Dimensions}\label{sec2}
In this section, we introduce and study the fractional quaternionic Dunkl transform in two dimensions, which extends the classical Dunkl framework to quaternion-valued functions and incorporates fractional parameters. We begin by presenting its precise definition, followed by a discussion of its structural properties, which will be instrumental in the subsequent analysis.
\begin{definition}
Let $\textbf{\textit{a}}$ and $\textbf{\textit{b}}$ be pure quaternions such that $\textbf{\textit{a}}^2 = \textbf{\textit{b}}^2 = -1$, and let
\[
f \in L^1_{\chi_1,\chi_2}(\mathbb{R}^2; \mathbb{H}).
\]
For each \( \theta_1, \theta_2 \in ((2n - 1)\pi,\,(2n + 1)\pi) \), with \( n \in \mathbb{Z} \), the \emph{fractional two-sided quaternionic Dunkl transform} (FrQDT) of \( f \) is defined by
\begin{equation}\label{FrQDT}
\mathcal{F}_{\chi_1, \chi_2}^{\theta_1, \theta_2;\, -\textbf{\textit{a}}, -\textbf{\textit{b}}}\{f\}(y)
=
c^{\theta_1}_{\chi_1} \, c^{\theta_2}_{\chi_2}
\int_{\mathbb{R}^2}
E^{\textbf{\textit{a}}}_{\chi_1,\theta_1}(x_1,y_1) \,
f(x_1, x_2) \,
E^{\textbf{\textit{b}}}_{\chi_2,\theta_2}(x_2,y_2)
\, d\mu_{\chi_1, \chi_2}(x),
\end{equation}
where the modified fractional Dunkl kernels are given by
\[
E^{\textbf{\textit{a}}}_{\chi_1,\theta_1}(x_1,y_1) = e^{-\frac{\textbf{\textit{a}}}{2}(x_1^2 + y_1^2) \cot\theta_1}
E_{\chi_1}\left(x_1, -\textbf{\textit{a}} \frac{y_1}{\sin\theta_1}\right),
\]
\[
E^{\textbf{\textit{b}}}_{\chi_2,\theta_2}(x_2,y_2) = 
E_{\chi_2}\left(x_2, -\textbf{\textit{b}} \frac{y_2}{\sin\theta_2}\right)
e^{-\frac{\textbf{\textit{b}}}{2}(x_2^2 + y_2^2) \cot\theta_2}.
\]

The constants \( c^{\theta_i}_{\chi_i} \), for \( i = 1,2 \), are defined by
\[
c^{\theta_i}_{\chi_i} = \frac{e^{\textbf{\textit{q}}_i(\chi_i+1)(\hat{\theta}_i \pi/2 - \theta_i)}}{|\sin(\theta_i)|^{\chi_i+1}} \, \alpha_{\chi_i}, 
\quad \text{where} \quad \alpha_{\chi_i} = \frac{1}{2^{\chi_i+1} \Gamma(\chi_i+1)},
\]
and \( \hat{\theta}_i := \operatorname{sgn}(\sin\theta_i) \). Here, \( \textbf{\textit{q}}_1 := \textbf{\textit{a}} \) and \( \textbf{\textit{q}}_2 := \textbf{\textit{b}} \).
\end{definition}

\begin{remark}[\textbf{Connections to Classical and Fractional Quaternionic Transforms}]\label{CCFQ}

We now discuss several  relations to other quaternionic transforms in the literature:

\begin{enumerate}
  \item \emph{Classical QDT at $\theta_1 = \theta_2 = \pi/2$.} When $\theta_1 = \theta_2 = \pi/2$, we have $\cot(\pi/2) = 0$ and $\sin(\pi/2) = 1$. Thus,
  \[
  \mathcal{F}_{\chi_1,\chi_2}^{\pi/2,\pi/2;\,-\textbf{\textit{a}},-\textbf{\textit{b}}}\{f\}(y)
  =
  c^{\pi/2}_{\chi_1} \, c^{\pi/2}_{\chi_2}
  \int_{\mathbb{R}^2}
    E_{\chi_1}(x_{1}, -\textbf{\textit{a}}\, y_{1}) \,
    f(x_{1},x_{2}) \,
    E_{\chi_2}(x_{2}, -\textbf{\textit{b}}\, y_{2}) \,
    d\mu_{\chi_1,\chi_2}(x),
  \]
  which reduces, with \( c^{\pi/2}_{\chi_1} = c^{\pi/2}_{\chi_2} = 1 \), to the classical two-sided quaternionic Dunkl transform~\cite{key-fah}.

  \item \emph{Reduction to FrQFT when $\chi_1 = \chi_2 = 0$.} If \( \chi_1 = \chi_2 = 0 \), then the Dunkl kernel reduces to the classical exponential kernel \( E_{0}(u,v) = e^{uv} \), and \( d\mu_{0,0}(x) \) is the standard Lebesgue measure. Taking \( \textbf{\textit{a}} = \textbf{\textit{i}} \), \( \textbf{\textit{b}} = \textbf{\textit{j}} \), we recover the two-sided fractional quaternionic Fourier transform (FrQFT) as introduced in~\cite{key-LSQ21}:
  \[
  \mathcal{F}_{0,0}^{\theta_1,\theta_2;\,-\textbf{\textit{i}},-\textbf{\textit{j}}}\{f\}(y)
  = 
  \int_{\mathbb{R}^2} 
    K_{\theta_1}(x_{1},y_{1})\,f(x_{1},x_{2})\,K_{\theta_2}(x_{2},y_{2})\, dx_1 dx_2.
  \]

  \item \emph{Connections to other integral transforms.} Several quaternionic integral transforms in the literature appear as special cases of the FrQFT, and since the FrQFT itself is contained in the more general FrQDT, these transforms can also be viewed as particular instances of the FrQDT framework.. For example:
  \begin{itemize}
    \item The classical quaternionic Fourier transform \cite{key-ell, key-777} corresponds to the FrQFT with \(\theta_1 = \theta_2 = \pi/2\).
    \item The two-dimensional quaternionic Mellin transform of fractional order (2D-QFrMT)  is related to the FrQFT via a logarithmic change of variables and exponential reweighting (see proposition 3.1 in~\cite{key-PG25}). Consequently, the 2D-QFrMT is also related to the FrQDT. This connection provides a unified framework to analyze scale and frequency simultaneously within the quaternionic setting.

  \end{itemize}
\end{enumerate}
\end{remark}
To better understand the internal structure of the fractional quaternionic Dunkl transform in two variables, it is useful to express it as a composition of one-dimensional fractional Dunkl transforms acting along separate coordinates. This leads to the following important factorization property:
\begin{remark}[\textbf{Tensor factorization}]\label{composition} 
The FrQDT can be viewed as a composition of two one-dimensional transforms with non-commutative ordering:
\begin{equation}\label{CE}
\mathcal{F}_{\chi_1,\chi_2}^{\theta_1,\theta_2; -\textbf{\textit{a}}, -\textbf{\textit{b}}} = \mathcal{D}_{\chi_2}^{\theta_2, \textbf{\textit{b}}} \circ \mathcal{D}_{\chi_1}^{\theta_1, \textbf{\textit{a}}},
\end{equation}
where:
\begin{itemize}
    \item $\mathcal{D}_{\chi_1}^{\theta_1, \textbf{\textit{a}}}$ acts on the first variable:
    \[
    (\mathcal{D}_{\chi_1}^{\theta_1, \textbf{\textit{a}}} f)(y_1, x_2) = c_{\chi_1}^{\theta_1} \int_{\mathbb{R}} E^{\textbf{\textit{a}}}_{\chi_1,\theta_1}(x_1,y_1) f(x_1, x_2) |x_1|^{2\chi_1+1} dx_1
    \]
    \item $\mathcal{D}_{\chi_2}^{\theta_2, \textbf{\textit{b}}}$ acts on the second variable of the intermediate function $g(y_1, x_2) := (\mathcal{D}_{\chi_1}^{\theta_1, \textbf{\textit{a}}} f)(y_1, x_2)$:
    \[
    (\mathcal{D}_{\chi_2}^{\theta_2, \textbf{\textit{b}}} g)(y_1, y_2) = c_{\chi_2}^{\theta_2} \int_{\mathbb{R}} g(y_1, x_2) E^{\textbf{\textit{b}}}_{\chi_2,\theta_2}(x_2,y_2) |x_2|^{2\chi_2+1} dx_2
    \]
\end{itemize}
The composition order is critical due to quaternion non-commutativity:
\[
\mathcal{D}_{\chi_2}^{\theta_2, \textbf{\textit{b}}} \circ \mathcal{D}_{\chi_1}^{\theta_1, \textbf{\textit{a}}} \neq \mathcal{D}_{\chi_1}^{\theta_1, \textbf{\textit{a}}} \circ \mathcal{D}_{\chi_2}^{\theta_2, \textbf{\textit{b}}}
\]
\end{remark}
We now turn to the main properties of the FrQDT, which play a central role in the subsequent analysis.
\begin{proposition}[\textbf{Boundedness}]
Let $f \in L^1_{\chi_1,\chi_2}(\mathbb{R}^2; \mathbb{H})$. The FrQDT satisfies
\[
\left\| \mathcal{F}_{\chi_1,\chi_2}^{\theta_1,\theta_2;\,-\textbf{\textit{a}},\,-\textbf{\textit{b}}} \{f\} \right\|_{\infty, \scriptscriptstyle \mathbb{H}} \leq C \| f \|_{1, \scriptscriptstyle \mathbb{H}, \chi_1, \chi_2},
\]
where
\[
C = \frac{1}{\Gamma(\chi_1 + 1) \Gamma(\chi_2 + 1) 2^{\chi_1 + \chi_2 + 2} |\sin(\theta_1)|^{\chi_1 + 1} |\sin(\theta_2)|^{\chi_2 + 1}}.
\]
\end{proposition}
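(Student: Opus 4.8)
The plan is to reduce everything to the scalar kernel bound \eqref{BK1} by exploiting the multiplicativity of the quaternion modulus, so that the non-commutative ordering of the kernels plays no role. First I would start from the integral definition \eqref{FrQDT}, pass the modulus $|\cdot|_{\scriptscriptstyle \mathbb{H}}$ inside the integral by the triangle inequality, and then use $|pq|_{\scriptscriptstyle \mathbb{H}} = |p|_{\scriptscriptstyle \mathbb{H}}|q|_{\scriptscriptstyle \mathbb{H}}$ to split the integrand into its three factors:
\[
\left| \mathcal{F}_{\chi_1,\chi_2}^{\theta_1,\theta_2;-\textbf{\textit{a}},-\textbf{\textit{b}}}\{f\}(y) \right|_{\scriptscriptstyle \mathbb{H}} \leq |c^{\theta_1}_{\chi_1}|_{\scriptscriptstyle \mathbb{H}} \, |c^{\theta_2}_{\chi_2}|_{\scriptscriptstyle \mathbb{H}} \int_{\mathbb{R}^2} |E^{\textbf{\textit{a}}}_{\chi_1,\theta_1}(x_1,y_1)|_{\scriptscriptstyle \mathbb{H}} \, |f(x)|_{\scriptscriptstyle \mathbb{H}} \, |E^{\textbf{\textit{b}}}_{\chi_2,\theta_2}(x_2,y_2)|_{\scriptscriptstyle \mathbb{H}} \, d\mu_{\chi_1,\chi_2}(x).
\]
The key point is that once the modulus is taken, the position of $f$ between the two kernels is immaterial, so quaternion non-commutativity does not obstruct the estimate.

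Next I would evaluate the two scalar constants. Since each $\textbf{\textit{q}}_i$ is a pure unit quaternion ($\textbf{\textit{q}}_i^2 = -1$), the real subalgebra $\mathbb{R} \oplus \mathbb{R}\,\textbf{\textit{q}}_i$ it generates is isometrically isomorphic to $\mathbb{C}$; consequently the exponential $e^{\textbf{\textit{q}}_i(\chi_i+1)(\hat{\theta}_i\pi/2 - \theta_i)}$ has unit modulus, whence
\[
|c^{\theta_i}_{\chi_i}|_{\scriptscriptstyle \mathbb{H}} = \frac{\alpha_{\chi_i}}{|\sin(\theta_i)|^{\chi_i+1}} = \frac{1}{2^{\chi_i+1}\Gamma(\chi_i+1)\,|\sin(\theta_i)|^{\chi_i+1}}.
\]
The product of these two factors over $i = 1,2$ reproduces exactly the constant $C$ in the statement.

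For the kernels, I would argue that the same isomorphism $\mathbb{R} \oplus \mathbb{R}\,\textbf{\textit{a}} \cong \mathbb{C}$ (sending $\textbf{\textit{a}} \mapsto i$) transports the scalar kernel $E_{\chi_1,\theta_1}(x_1,y_1)$ to $E^{\textbf{\textit{a}}}_{\chi_1,\theta_1}(x_1,y_1)$: the prefactor $e^{-\frac{\textbf{\textit{a}}}{2}(x_1^2+y_1^2)\cot\theta_1}$ corresponds to $e^{-\frac{i}{2}(x_1^2+y_1^2)\cot\theta_1}$, and $E_{\chi_1}(x_1, -\textbf{\textit{a}}\, y_1/\sin\theta_1)$ to the Dunkl kernel at the imaginary argument underlying \eqref{BK1}. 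Because the isomorphism is an isometry for $|\cdot|_{\scriptscriptstyle \mathbb{H}}$, the scalar bound \eqref{BK1} gives $|E^{\textbf{\textit{a}}}_{\chi_1,\theta_1}(x_1,y_1)|_{\scriptscriptstyle \mathbb{H}} \leq 1$ for all real $x_1, y_1$, and identically $|E^{\textbf{\textit{b}}}_{\chi_2,\theta_2}(x_2,y_2)|_{\scriptscriptstyle \mathbb{H}} \leq 1$. Substituting both into the displayed estimate collapses the integral to $\int_{\mathbb{R}^2} |f(x)|_{\scriptscriptstyle \mathbb{H}}\, d\mu_{\chi_1,\chi_2}(x) = \|f\|_{1,\scriptscriptstyle \mathbb{H},\chi_1,\chi_2}$, and taking the essential supremum over $y$ closes the argument.

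The main obstacle is precisely the kernel bound $|E^{\textbf{\textit{a}}}_{\chi,\theta}(\cdot,\cdot)|_{\scriptscriptstyle \mathbb{H}} \leq 1$, since \eqref{BK1} is recorded only in the complex scalar setting. The cleanest route is the subalgebra isomorphism above; alternatively, one can reprove it directly from the integral representation of the fractional Dunkl kernel, reading $\textbf{\textit{a}}$ for $i$ and combining $|e^{\textbf{\textit{a}}\,x_1 y_1 t/\sin\theta_1}|_{\scriptscriptstyle \mathbb{H}} = 1$ with the positivity of the weight $(1-t^2)^{\chi_1-1/2}(1+t)$ on $(-1,1)$. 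Everything else is a routine application of the triangle inequality and the factorization of the weighted measure $d\mu_{\chi_1,\chi_2}$.
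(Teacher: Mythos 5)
Your proposal is correct and follows essentially the same route as the paper: triangle inequality plus multiplicativity of $|\cdot|_{\scriptscriptstyle \mathbb{H}}$ to split the integrand, the uniform kernel bound \eqref{BK1} to discard both kernels, and the explicit moduli of the constants $c^{\theta_i}_{\chi_i}$ to assemble $C$. Your extra care in transporting \eqref{BK1} to the quaternionic kernels via the isometric subalgebra $\mathbb{R}\oplus\mathbb{R}\,\textbf{\textit{a}}\cong\mathbb{C}$ fills in a step the paper applies implicitly, but it does not change the argument.
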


\begin{proof}
By definition of the transform and properties of the quaternionic modulus,
\begin{align*}
\left| \mathcal{F}_{\chi_1,\chi_2}^{\theta_1,\theta_2;\,-\textbf{\textit{a}},\,-\textbf{\textit{b}}} \{f\}(y) \right|_{\scriptscriptstyle \mathbb{H}} 
&\leq |c^{\theta_1}_{\chi_1} c^{\theta_2}_{\chi_2}|_{\scriptscriptstyle \mathbb{H}} \int_{\mathbb{R}^2} \left| E^{\textbf{\textit{a}}}_{\chi_1,\theta_1}(x_1,y_1) f(x_1, x_2) E^{\textbf{\textit{b}}}_{\chi_2,\theta_2}(x_2,y_2) \right|_{\scriptscriptstyle \mathbb{H}} d\mu_{\chi_1,\chi_2}(x) \\
&\leq |c^{\theta_1}_{\chi_1} c^{\theta_2}_{\chi_2}|_{\scriptscriptstyle \mathbb{H}} \int_{\mathbb{R}^2} \left| E^{\textbf{\textit{a}}}_{\chi_1,\theta_1}(x_1,y_1) \right|_{\scriptscriptstyle \mathbb{H}}  |f(x_1, x_2)|_{\scriptscriptstyle \mathbb{H}}  \left| E^{\textbf{\textit{b}}}_{\chi_2,\theta_2}(x_2,y_2) \right|_{\scriptscriptstyle \mathbb{H}} d\mu_{\chi_1,\chi_2}(x).
\end{align*}
Using the uniform bound (\ref{BK1}), we have
\[
\left| \mathcal{F}_{\chi_1,\chi_2}^{\theta_1,\theta_2;\,-\textbf{\textit{a}},\,-\textbf{\textit{b}}} \{f\}(y) \right|_{\scriptscriptstyle \mathbb{H}} \leq |c^{\theta_1}_{\chi_1} c^{\theta_2}_{\chi_2}|_{\scriptscriptstyle \mathbb{H}} \int_{\mathbb{R}^2} |f(x_1, x_2)|_{\scriptscriptstyle \mathbb{H}} d\mu_{\chi_1,\chi_2}(x).
\]
Substituting the explicit constants
\[
|c^{\theta_i}_{\chi_i}|_{\scriptscriptstyle \mathbb{H}} = \frac{1}{|\sin(\theta_i)|^{\chi_i+1} 2^{\chi_i+1} \Gamma(\chi_i+1)},
\]
we obtain
\[
\left| \mathcal{F}_{\chi_1,\chi_2}^{\theta_1,\theta_2;\,-\textbf{\textit{a}},\,-\textbf{\textit{b}}} \{f\}(y) \right|_{\scriptscriptstyle \mathbb{H}} \leq C \| f \|_{1, \scriptscriptstyle \mathbb{H}, \chi_1, \chi_2}
\]
for all $y \in \mathbb{R}^2$. Taking the essential supremum over $y$ yields the result:
\[
\left\| \mathcal{F}_{\chi_1,\chi_2}^{\theta_1,\theta_2;\,-\textbf{\textit{a}},\,-\textbf{\textit{b}}} \{f\} \right\|_{\infty, \scriptscriptstyle \mathbb{H}} \leq C \| f \|_{1, \scriptscriptstyle \mathbb{H}, \chi_1, \chi_2}.
\]
\end{proof}
\begin{proposition}[\textbf{Composition Formula}]
Let $\theta_1, \theta_2, \beta_1, \beta_2 \in \mathbb{R}$ such that $\theta_j, \beta_j \notin \pi\mathbb{Z}$ for $j=1,2$. For any function $f \in L^1_{\chi_1,\chi_2}(\mathbb{R}^2; \mathbb{H})$ such that $\mathcal{F}_{\chi_1,\chi_2}^{\beta_1,\beta_2; -\textbf{\textit{a}}, -\textbf{\textit{b}}}\{f\} \in L^1_{\chi_1,\chi_2}(\mathbb{R}^2; \mathbb{H})$, the following composition formula holds:
\begin{equation}\label{CE3}
\mathcal{F}_{\chi_1,\chi_2}^{\theta_1,\theta_2; -\textbf{\textit{a}}, -\textbf{\textit{b}}} \circ \mathcal{F}_{\chi_1,\chi_2}^{\beta_1,\beta_2; -\textbf{\textit{a}}, -\textbf{\textit{b}}}\{f\}
= \mathcal{D}_{\chi_2}^{\theta_2 + \beta_2, \textbf{\textit{b}}} \circ \mathcal{D}_{\chi_1}^{\theta_1 + \beta_1, \textbf{\textit{a}}}(f).
\end{equation}
almost everywhere on $\mathbb{R}^2$, provided that $\theta_j + \beta_j \notin \pi\mathbb{Z}$ for $j=1,2$.
\end{proposition}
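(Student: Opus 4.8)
The plan is to lift the one-dimensional composition formula \eqref{CD} to two dimensions by means of the tensor factorization \eqref{CE}. Applying \eqref{CE} to each of the two factors on the left-hand side of \eqref{CE3}, I would first write
\[
\mathcal{F}_{\chi_1,\chi_2}^{\theta_1,\theta_2; -\textbf{\textit{a}}, -\textbf{\textit{b}}} \circ \mathcal{F}_{\chi_1,\chi_2}^{\beta_1,\beta_2; -\textbf{\textit{a}}, -\textbf{\textit{b}}}\{f\}
= \mathcal{D}_{\chi_2}^{\theta_2, \textbf{\textit{b}}} \circ \mathcal{D}_{\chi_1}^{\theta_1, \textbf{\textit{a}}} \circ \mathcal{D}_{\chi_2}^{\beta_2, \textbf{\textit{b}}} \circ \mathcal{D}_{\chi_1}^{\beta_1, \textbf{\textit{a}}}(f),
\]
thereby reducing the statement to a rearrangement of this four-fold composition into two same-variable pairs.

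The decisive step is to interchange the two inner operators $\mathcal{D}_{\chi_1}^{\theta_1, \textbf{\textit{a}}}$ and $\mathcal{D}_{\chi_2}^{\beta_2, \textbf{\textit{b}}}$, which act on the first and second coordinates respectively. Expanding both as iterated integrals, I would appeal to Fubini's theorem to separate the $x_1$- and $x_2$-integrations; the hypothesis $\mathcal{F}_{\chi_1,\chi_2}^{\beta_1,\beta_2; -\textbf{\textit{a}}, -\textbf{\textit{b}}}\{f\} \in L^1_{\chi_1,\chi_2}(\mathbb{R}^2; \mathbb{H})$ combined with the uniform bound \eqref{BK1} on the kernels furnishes the absolute integrability required. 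The structural fact that powers the interchange is that the $\textbf{\textit{a}}$-valued kernel $E^{\textbf{\textit{a}}}_{\chi_1,\theta_1}$ acts by left multiplication on the first variable, while the $\textbf{\textit{b}}$-valued kernel $E^{\textbf{\textit{b}}}_{\chi_2,\beta_2}$ acts by right multiplication on the second: left and right multiplication on a fixed $f(x_1,x_2)$ commute by the associativity of $\mathbb{H}$, so once the two integrations are disentangled they may be carried out in either order.

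Having swapped the inner operators, the composition regroups as
\[
\mathcal{D}_{\chi_2}^{\theta_2, \textbf{\textit{b}}} \circ \mathcal{D}_{\chi_2}^{\beta_2, \textbf{\textit{b}}} \circ \mathcal{D}_{\chi_1}^{\theta_1, \textbf{\textit{a}}} \circ \mathcal{D}_{\chi_1}^{\beta_1, \textbf{\textit{a}}}(f),
\]
and I would then apply the one-dimensional composition formula \eqref{CD} separately in each variable to collapse the two same-variable pairs, arriving at $\mathcal{D}_{\chi_2}^{\theta_2+\beta_2, \textbf{\textit{b}}} \circ \mathcal{D}_{\chi_1}^{\theta_1+\beta_1, \textbf{\textit{a}}}(f)$, which is the right-hand side of \eqref{CE3}. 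The borderline configurations $\theta_j+\beta_j \in \pi\mathbb{Z}$ are absorbed by the almost-everywhere clause of \eqref{CD}, yielding the identity a.e. on $\mathbb{R}^2$.

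I expect the interchange of the inner operators to be the main obstacle. The difficulty is that the prefactors $c^{\theta_1}_{\chi_1}$ and $c^{\beta_2}_{\chi_2}$ lie in the distinct commuting planes $\{1,\textbf{\textit{a}}\}$ and $\{1,\textbf{\textit{b}}\}$, so they fail to commute with one another and with the opposite-plane kernels and cannot be slid freely through the integrals. Concretely, the second-variable ($\textbf{\textit{b}}$) composition causes no trouble, since the two $\textbf{\textit{b}}$-kernels sit adjacent on the right of $f$ and collapse directly via \eqref{CD}; the delicate point is the first-variable ($\textbf{\textit{a}}$) composition, where one must ensure that no $\textbf{\textit{b}}$-plane prefactor remains trapped between the two $\textbf{\textit{a}}$-kernels before the inner first-variable integration is carried out. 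The required bookkeeping is to keep every $\textbf{\textit{a}}$-plane factor to the left of $f$ and every $\textbf{\textit{b}}$-plane factor to its right, using commutativity only within each plane; verifying that this placement is consistent is exactly what legitimizes the reduction to two independent one-dimensional composition identities.
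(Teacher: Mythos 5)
Your proposal is correct and follows essentially the same route as the paper: the paper likewise expands the composition as an iterated integral, applies Fubini's theorem to disentangle the two sets of variables, and then collapses each same-variable pair via the one-dimensional composition formula \eqref{CD}. If anything, your bookkeeping of the $\textbf{\textit{a}}$-plane and $\textbf{\textit{b}}$-plane prefactors is more explicit than the paper's, which silently pulls all four constants $c^{\theta_i}_{\chi_i}, c^{\beta_i}_{\chi_i}$ to the front of the integral even though they are quaternion-valued.
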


\begin{proof}
Let $f \in L^1_{\chi_1,\chi_2}(\mathbb{R}^2; \mathbb{H})$. By definition, the successive application of the two quaternionic fractional Dunkl transforms writes explicitly as:
\begin{align*}
&\left(\mathcal{F}_{\chi_1,\chi_2}^{\theta_1,\theta_2; -\textbf{\textit{a}}, -\textbf{\textit{b}}} \circ \mathcal{F}_{\chi_1,\chi_2}^{\beta_1,\beta_2; -\textbf{\textit{a}}, -\textbf{\textit{b}}}\right)\{f\}(y_1, y_2) \\
&= c^{\theta_1}_{\chi_1} c^{\theta_2}_{\chi_2} \int_{\mathbb{R}^2}
E^{\textbf{\textit{a}}}_{\chi_1,\theta_1}(x_1, y_1)
\left[
\mathcal{F}_{\chi_1,\chi_2}^{\beta_1,\beta_2; -\textbf{\textit{a}}, -\textbf{\textit{b}}}\{f\}(x_1, x_2)
\right]
E^{\textbf{\textit{b}}}_{\chi_2,\theta_2}(x_2, y_2)
\, d\mu_{\chi_1, \chi_2}(x).
\end{align*}
Substituting the inner transform and applying Fubini's theorem to interchange the integrals, we get
\begin{align*}
&\left(\mathcal{F}_{\chi_1,\chi_2}^{\theta_1,\theta_2; -\textbf{\textit{a}}, -\textbf{\textit{b}}} \circ \mathcal{F}_{\chi_1,\chi_2}^{\beta_1,\beta_2; -\textbf{\textit{a}}, -\textbf{\textit{b}}}\right)\{f\}(y_1, y_2) \\
&= c^{\theta_1}_{\chi_1} c^{\theta_2}_{\chi_2} c^{\beta_1}_{\chi_1} c^{\beta_2}_{\chi_2}
\int_{\mathbb{R}^2} \int_{\mathbb{R}^2}
E^{\textbf{\textit{a}}}_{\chi_1,\theta_1}(x_1, y_1)
E^{\textbf{\textit{a}}}_{\chi_1,\beta_1}(u_1, x_1)
f(u_1, u_2)
E^{\textbf{\textit{b}}}_{\chi_2,\beta_2}(u_2, x_2)
E^{\textbf{\textit{b}}}_{\chi_2,\theta_2}(x_2, y_2) \\
&\quad \times d\mu_{\chi_1, \chi_2}(u) d\mu_{\chi_1, \chi_2}(x).
\end{align*}
By the one-dimensional composition formula \eqref{CD} for the fractional Dunkl transform, we have
\[
\mathcal{D}_{\chi_j}^{\theta_j} \circ \mathcal{D}_{\chi_j}^{\beta_j} = \mathcal{D}_{\chi_j}^{\theta_j + \beta_j}, \quad j=1,2,
\]
which implies
\[
\left(\mathcal{F}_{\chi_1,\chi_2}^{\theta_1,\theta_2;\,-\textbf{\textit{a}},\,-\textbf{\textit{b}}} \circ \mathcal{F}_{\chi_1,\chi_2}^{\beta_1,\beta_2;\,-\textbf{\textit{a}},\,-\textbf{\textit{b}}}\right) = \mathcal{D}_{\chi_2}^{\theta_2 + \beta_2, \textbf{\textit{b}}} \circ \mathcal{D}_{\chi_1}^{\theta_1 + \beta_1, \textbf{\textit{a}}}.
\]
\end{proof}
This composition property directly leads to an inversion result when the involved parameters are taken as opposites, as stated below.

\begin{theorem}[\textbf{Inversion Formula}]
Let \( f \in L^2_{\chi_1,\chi_2}(\mathbb{R}^2; \mathbb{H}) \). The FrQDT  satisfies the inversion formula
\[
f(x) = \mathcal{F}_{\chi_1,\chi_2}^{-\theta_1,-\theta_2; \, -\textbf{\textit{a}}, -\textbf{\textit{b}}} \left\{ \mathcal{F}_{\chi_1,\chi_2}^{\theta_1,\theta_2; \, -\textbf{\textit{a}}, -\textbf{\textit{b}}} \{f\} \right\} (x),
\]
holding almost everywhere on \(\mathbb{R}^2\).
\end{theorem}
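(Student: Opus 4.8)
The plan is to obtain the inversion as the opposite-parameter case of the composition formula \eqref{CE3} and to anchor it in the $L^2$-unitarity of the one-dimensional transform. Setting $\beta_j=-\theta_j$ in \eqref{CE3}, I would write
\[
\mathcal{F}_{\chi_1,\chi_2}^{-\theta_1,-\theta_2;\,-\textbf{\textit{a}},-\textbf{\textit{b}}}\circ \mathcal{F}_{\chi_1,\chi_2}^{\theta_1,\theta_2;\,-\textbf{\textit{a}},-\textbf{\textit{b}}} = \mathcal{D}_{\chi_2}^{\theta_2-\theta_2,\,\textbf{\textit{b}}}\circ \mathcal{D}_{\chi_1}^{\theta_1-\theta_1,\,\textbf{\textit{a}}} = \mathcal{D}_{\chi_2}^{0,\,\textbf{\textit{b}}}\circ \mathcal{D}_{\chi_1}^{0,\,\textbf{\textit{a}}},
\]
and then invoke the definition of the fractional Dunkl transform at $\theta=0$ (the case $\theta=2n\pi$ with $n=0$), according to which $\mathcal{D}_{\chi_j}^{0,\textbf{\textit{q}}_j}$ is the identity operator. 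This immediately returns $f$, which is the asserted identity.

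The one genuinely delicate point---and the main obstacle---is that \eqref{CE3} was proved only under the restriction $\theta_j+\beta_j\notin\pi\mathbb{Z}$, whereas the choice $\beta_j=-\theta_j$ lands exactly on the excluded value $\theta_j+\beta_j=0\in\pi\mathbb{Z}$. I therefore cannot quote \eqref{CE3} as a black box and must reprove it at the boundary. Re-tracing its argument through the tensor factorization \eqref{CE}, the composition decomposes into one-dimensional partial transforms acting separately on each coordinate, and the collapse of each opposite pair is governed by the one-dimensional composition law \eqref{CD}. The key is that \eqref{CD} remains valid---now with equality almost everywhere---precisely when $\theta_j+\beta_j\in\pi\mathbb{Z}$; this almost-everywhere identity is what rescues the degenerate case and yields $\mathcal{D}_{\chi_j}^{0}=\mathrm{Id}$ in each variable.

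Two further items complete the argument. First, since \eqref{CE3} and \eqref{CD} are formulated for $L^1$ data while the theorem concerns $f\in L^2_{\chi_1,\chi_2}(\mathbb{R}^2;\mathbb{H})$, I would transfer the identity to $L^2$ by means of the unitary extension \eqref{UE}: each $\mathcal{D}_{\chi_j}^{\theta_j}$ is unitary on $L^2_{\chi_j}(\mathbb{R})$ with inverse $\mathcal{D}_{\chi_j}^{-\theta_j}$, so the composite built from these building blocks inherits invertibility, and a density argument from $L^1_{\chi_1,\chi_2}\cap L^2_{\chi_1,\chi_2}$ together with the isometry property extends the almost-everywhere identity to all of $L^2$. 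Second, because $\mathbb{H}$ is non-commutative, I would keep careful track of the placement of the quaternionic kernels and constants throughout, respecting the critical ordering emphasized in Remark~\ref{composition}: the $\textbf{\textit{a}}$-valued factors remain on the left of $f$ and the $\textbf{\textit{b}}$-valued factors on the right, so that the two coordinate integrations may be carried out successively by Fubini's theorem. Assembling these pieces yields the stated inversion, valid almost everywhere on $\mathbb{R}^2$.
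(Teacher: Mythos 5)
Your proposal is correct and follows essentially the same route as the paper: specialize the composition formula \eqref{CE3} to $\beta_j=-\theta_j$ and invoke the unitarity/identity property of $\mathcal{D}_{\chi_j}^{0}$ to conclude. In fact you are more careful than the paper's own two-line proof, which silently applies \eqref{CE3} at the excluded value $\theta_j+\beta_j=0\in\pi\mathbb{Z}$ and does not address the $L^1$-versus-$L^2$ hypothesis; your appeal to the almost-everywhere case of \eqref{CD} and the density/unitarity extension \eqref{UE} is exactly what is needed to close those gaps.
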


\begin{proof}
Applying the composition formula \eqref{CE3} and the unitarity property of the fractional Dunkl transform \eqref{UE}, we obtain:
\begin{align*}
\mathcal{F}_{\chi_1,\chi_2}^{-\theta_1,-\theta_2; \, -\textbf{\textit{a}}, -\textbf{\textit{b}}} \circ \mathcal{F}_{\chi_1,\chi_2}^{\theta_1,\theta_2; \, -\textbf{\textit{a}}, -\textbf{\textit{b}}}\{f\}
&= \mathcal{D}_{\chi_2}^{\theta_2 -\theta_2, \textbf{\textit{b}}} \circ \mathcal{D}_{\chi_1}^{\theta_1-\theta_1, \textbf{\textit{a}}}(f)\\
&= \mathrm{Id}(f) = f
\end{align*}
\end{proof}
Although the inversion formula provides a reconstruction result, the Plancherel theorem, stated below, is derived independently and characterizes the transform as a norm-preserving operator on $L^2_{\chi_1,\chi_2}(\mathbb{R}^2; \mathbb{H})$.

To facilitate the proof of the Plancherel identity, we begin by proving a preliminary result concerning the inner product structure under the transform.
\begin{lemma}\label{L1}
Let \(\mathcal{H}_{n,m}^{\chi_1,\chi_2}(x_1,x_2) := h_n^{\chi_1}(x_1) \, h_m^{\chi_2}(x_2)\) be the tensor product basis functions on \(\mathbb{R}^2\), where the orthonormal system \(\{h_n^\chi\}_{n \in \mathbb{N}}\) of generalized Hermite functions is defined as in Theorem~\ref{lager} . Then the family
\[
\left\{ \mathcal{H}_{n,m}^{\chi_1,\chi_2} \right\}_{n,m \in \mathbb{N}}
\]
forms a complete orthonormal basis of the weighted space \(L^2_{\chi_1,\chi_2}(\mathbb{R}^2; \mathbb{H})\) with respect to the inner product \eqref{IQP}. Moreover, each \(\mathcal{H}_{n,m}^{\chi_1,\chi_2}\) is an eigenfunction of the FrQDT with eigenvalues
\begin{equation}\label{SD}
\mathcal{F}_{\chi_1,\chi_2}^{\theta_1,\theta_2;\, -\textbf{\textit{a}}, -\textbf{\textit{b}}} \left\{ \mathcal{H}_{n,m}^{\chi_1,\chi_2} \right\} = e^{\textbf{\textit{a}} n \theta_1} e^{\textbf{\textit{b}} m \theta_2} \, \mathcal{H}_{n,m}^{\chi_1,\chi_2}.
\end{equation}
\end{lemma}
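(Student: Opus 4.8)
The plan is to establish the two assertions separately: first the orthonormality and completeness of the tensor family, and then the eigenfunction relation \eqref{SD} via the tensor factorization \eqref{CE}.

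For orthonormality and completeness, I would start from Theorem~\ref{lager}, which guarantees that $\{h_n^\chi\}_{n\in\mathbb{N}}$ is an orthonormal basis of $L^2_\chi(\mathbb{R})$. Since the weight factorizes as $d\mu_{\chi_1,\chi_2}(x)=|x_1|^{2\chi_1+1}|x_2|^{2\chi_2+1}\,dx_1\,dx_2$ and each $h_n^\chi$ is real-valued, the inner product \eqref{IQP} of two tensor products splits, by Tonelli's theorem, into a product of two one-dimensional integrals; orthonormality in each factor then yields $\langle \mathcal{H}_{n,m}^{\chi_1,\chi_2};\mathcal{H}_{n',m'}^{\chi_1,\chi_2}\rangle=\delta_{n,n'}\delta_{m,m'}$. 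For completeness in the quaternion-valued space, I would write an arbitrary $f\in L^2_{\chi_1,\chi_2}(\mathbb{R}^2;\mathbb{H})$ as $f=f_0+\textbf{\textit{i}}f_1+\textbf{\textit{j}}f_2+\textbf{\textit{k}}f_3$ with real components, expand each real component in the scalar tensor basis, and recombine. Because every $\mathcal{H}_{n,m}^{\chi_1,\chi_2}$ is real-valued it commutes with the imaginary units, so the four expansions collect into a single series $f=\sum_{n,m}\mathcal{H}_{n,m}^{\chi_1,\chi_2}\,c_{n,m}$ with coefficients $c_{n,m}\in\mathbb{H}$, establishing completeness as a right $\mathbb{H}$-module basis.

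For the eigenfunction relation, the central tool is the factorization \eqref{CE}, which reduces the problem to applying the two one-variable transforms in succession. The key algebraic observation is that each pure quaternion $\textbf{\textit{a}},\textbf{\textit{b}}$ with square $-1$ generates a subalgebra $\mathbb{R}\oplus\mathbb{R}\textbf{\textit{a}}\cong\mathbb{C}$, so the one-variable quaternionic transforms $\mathcal{D}_{\chi_1}^{\theta_1,\textbf{\textit{a}}}$ and $\mathcal{D}_{\chi_2}^{\theta_2,\textbf{\textit{b}}}$ are the images of the scalar transform $\mathcal{D}_\chi^\theta$ of Theorem~\ref{lager} under the embeddings $i\mapsto\textbf{\textit{a}}$ and $i\mapsto\textbf{\textit{b}}$. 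Since $h_n^\chi$ is real-valued, the whole scalar computation takes place inside that commutative copy of $\mathbb{C}$, so the spectral relation \eqref{SR} transfers verbatim to give $\mathcal{D}_{\chi_1}^{\theta_1,\textbf{\textit{a}}}h_n^{\chi_1}=e^{\textbf{\textit{a}}n\theta_1}h_n^{\chi_1}$ and likewise for $\textbf{\textit{b}}$. Applying $\mathcal{D}_{\chi_1}^{\theta_1,\textbf{\textit{a}}}$ first and pulling the $x_2$-independent real factor $h_m^{\chi_2}(x_2)$ out of the first-variable integral yields $e^{\textbf{\textit{a}}n\theta_1}h_n^{\chi_1}(y_1)h_m^{\chi_2}(x_2)$; then applying $\mathcal{D}_{\chi_2}^{\theta_2,\textbf{\textit{b}}}$ and extracting the prefactor $e^{\textbf{\textit{a}}n\theta_1}h_n^{\chi_1}(y_1)$ on the left (it does not depend on the integration variable $x_2$) produces $e^{\textbf{\textit{a}}n\theta_1}h_n^{\chi_1}(y_1)\,e^{\textbf{\textit{b}}m\theta_2}h_m^{\chi_2}(y_2)$.

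The main obstacle is the careful bookkeeping of quaternionic non-commutativity. Two points demand attention: first, in $\mathcal{D}_{\chi_1}^{\theta_1,\textbf{\textit{a}}}$ the kernel sits on the left while in $\mathcal{D}_{\chi_2}^{\theta_2,\textbf{\textit{b}}}$ it sits on the right, so one must verify at each step that the factor being removed from an integral is genuinely constant in the integration variable and is moved to the correct side. Second, to reach the stated eigenvalue one must commute the real scalar $h_n^{\chi_1}(y_1)$ past the quaternionic exponential $e^{\textbf{\textit{b}}m\theta_2}$; this is legitimate precisely because $h_n^{\chi_1}(y_1)\in\mathbb{R}$ is central, which finally collects the factor into $e^{\textbf{\textit{a}}n\theta_1}e^{\textbf{\textit{b}}m\theta_2}\mathcal{H}_{n,m}^{\chi_1,\chi_2}$. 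Justifying the transfer of \eqref{SR} from $i$ to the units $\textbf{\textit{a}}$ and $\textbf{\textit{b}}$ through the embedding $\mathbb{C}\hookrightarrow\mathbb{H}$, rather than recomputing each transform from its integral definition, is the conceptual crux that keeps the argument short.
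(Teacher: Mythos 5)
Your proposal is correct and follows essentially the same route as the paper's proof: orthonormality via factorization of the weighted inner product, completeness via decomposition of $f$ into its four real components, and the eigenvalue relation via the tensor factorization \eqref{CE} combined with the spectral relation \eqref{SR}. The only difference is that you make explicit the justification for transferring \eqref{SR} from $i$ to the units $\textbf{\textit{a}}$, $\textbf{\textit{b}}$ through the embeddings $\mathbb{C}\cong\mathbb{R}\oplus\mathbb{R}\textbf{\textit{a}}\hookrightarrow\mathbb{H}$, a point the paper uses implicitly; this is a welcome clarification rather than a departure.
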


\begin{proof}
Let \(f \in L^2_{\chi_1,\chi_2}(\mathbb{R}^2; \mathbb{H})\). We decompose \(f\) into its real-valued components as
\[
f(x) = f_0(x) + \textbf{\textit{i}} f_1(x) + \textbf{\textit{j}} f_2(x) + \textbf{\textit{k}} f_3(x),
\]
where each \( f_\ell \in L^2_{\chi_1,\chi_2}(\mathbb{R}^2; \mathbb{R} ) \) for \( \ell = 0,1,2,3 \).

Since \( \mathcal{H}^{\chi_1,\chi_2}_{n,m} \) forms a  basis of the scalar space \( L^2_{\chi_1,\chi_2}(\mathbb{R}^2; \mathbb{R}) \), each component function \( f_\ell \) admits the following expansion:
\[
f_\ell(x_1,x_2) = \sum_{n=0}^\infty \sum_{m=0}^\infty c^{(\ell)}_{n,m} \, \mathcal{H}^{\chi_1,\chi_2}_{n,m}(x_1,x_2),
\]
where the Fourier–Hermite coefficients are given by
\[
c^{(\ell)}_{n,m} = \int_{\mathbb{R}^2} f_\ell(x_1,x_2) \, \mathcal{H}^{\chi_1,\chi_2}_{n,m}(x_1,x_2) \, d\mu_{\chi_1,\chi_2}(x),
\]
and satisfy \( c^{(\ell)}_{n,m} \in \mathbb{R} \).

By linearity, we then recover the quaternionic expansion of \( f \) as
\[
f(x) = \sum_{n=0}^\infty \sum_{m=0}^\infty \left( c^{(0)}_{n,m} + \textbf{\textit{i}}c^{(1)}_{n,m} + \textbf{\textit{j}}c^{(2)}_{n,m} + \textbf{\textit{k}}c^{(3)}_{n,m} \right) \, \mathcal{H}^{\chi_1,\chi_2}_{n,m}(x) =\sum_{n=0}^\infty \sum_{m=0}^\infty c_{n,m} \, \mathcal{H}^{\chi_1,\chi_2}_{n,m}(x),
\]
where the quaternion-valued coefficients \( c_{n,m} \in \mathbb{H} \) are defined by
\[
c_{n,m} := c^{(0)}_{n,m} + \textbf{\textit{i}}c^{(1)}_{n,m} + \textbf{\textit{j}}c^{(2)}_{n,m} + \textbf{\textit{k}}c^{(3)}_{n,m}.
\]
Then, the elementary tensors $\mathcal{H}_{n,m}^{\chi_1,\chi_2}(x_1,x_2) = h_n^{\chi_1}(x_1) h_m^{\chi_2}(x_2)$ form a basis of the space \(  L^2_{\chi_1,\chi_2}(\mathbb{R}^2; \mathbb{H}) \), and we compute:
\begin{align*}
\langle \mathcal{H}_{n,m}^{\chi_1,\chi_2}; \mathcal{H}_{n',m'}^{\chi_1,\chi_2} \rangle 
&= \int_{\mathbb{R}^2} h_n^{\chi_1}(x_1) h_m^{\chi_2}(x_2) \, \overline{h_{n'}^{\chi_1}(x_1) h_{m'}^{\chi_2}(x_2)} \, d\mu_{\chi_1,\chi_2}(x) \\
&= \left( \int_{\mathbb{R}} h_n^{\chi_1}(x_1) \overline{h_{n'}^{\chi_1}(x_1)} |x_1|^{2\chi_1+1} dx_1 \right)
\left( \int_{\mathbb{R}} h_m^{\chi_2}(x_2) \overline{h_{m'}^{\chi_2}(x_2)} |x_2|^{2\chi_2+1} dx_2 \right) \\
&= \delta_{n,n'} \delta_{m,m'}.
\end{align*}
where $\delta_{i,j}$ denotes the Kronecker delta, equal to $1$ if $i = j$ and $0$ otherwise.

It remains to compute the action of the FrQDT. Using the factorization formula \eqref{CE} and the spectral relation \eqref{SR}, we have for any fixed pair $(n,m) \in \mathbb{N}^2$,
\begin{align*}
\mathcal{F}_{\chi_1,\chi_2}^{\theta_1,\theta_2; \, -\textbf{\textit{a}}, -\textbf{\textit{b}}} \left\{ \mathcal{H}_{n,m}^{\chi_1,\chi_2} \right\}(x_1,x_2)
&= \left( \mathcal{D}_{\chi_2}^{\theta_2, \textbf{\textit{b}}} \circ \mathcal{D}_{\chi_1}^{\theta_1, \textbf{\textit{a}}} \right) \left( h_n^{\chi_1}(x_1) h_m^{\chi_2}(x_2) \right) \\
&= \mathcal{D}_{\chi_2}^{\theta_2, \textbf{\textit{b}}} \left( e^{\textbf{\textit{a}} n \theta_1} h_n^{\chi_1}(x_1) h_m^{\chi_2}(x_2) \right) \\
&= e^{\textbf{\textit{a}} n \theta_1}  e^{\textbf{\textit{b}} m \theta_2} h_n^{\chi_1}(x_1) h_m^{\chi_2}(x_2).
\end{align*}

Hence, each basis function \( \mathcal{H}_{n,m}^{\chi_1,\chi_2} \) is an eigenfunction of the FrQDT, with corresponding eigenvalue \( e^{\textbf{\textit{a}} n \theta_1}  e^{\textbf{\textit{b}} m \theta_2} \), which completes the proof.
\end{proof}
Based on the previous lemma, we now establish the Plancherel theorem for the FrQDT.
\begin{theorem}[\textbf{Plancherel Formula}]
Let \( f \in L^2_{\chi_1,\chi_2}(\mathbb{R}^2; \mathbb{H}) \). The quaternionic fractional Dunkl transform satisfies the identity
\[
\left\| \mathcal{F}_{\chi_1,\chi_2}^{\theta_1, \theta_2; \, -\textbf{\textit{a}}, -\textbf{\textit{b}}}\{f\} \right\|_{2, \scriptscriptstyle \mathbb{H}, \chi_1, \chi_2}^2
=
\left\| f \right\|_{2, \scriptscriptstyle \mathbb{H}, \chi_1, \chi_2}^2.
\]
\end{theorem}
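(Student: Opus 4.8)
The plan is to reduce the Plancherel identity to a statement about expansion coefficients and then exploit that the quaternionic modulus is multiplicative. Since Lemma~\ref{L1} guarantees that $\{\mathcal{H}_{n,m}^{\chi_1,\chi_2}\}_{n,m}$ is a complete orthonormal basis of $L^2_{\chi_1,\chi_2}(\mathbb{R}^2;\mathbb{H})$, I would first write $f=\sum_{n,m}c_{n,m}\,\mathcal{H}_{n,m}^{\chi_1,\chi_2}$ with $c_{n,m}\in\mathbb{H}$, the series converging in the $L^2$ norm. Because each $\mathcal{H}_{n,m}^{\chi_1,\chi_2}$ is real-valued it commutes with every quaternion, so the orthonormality relation established in Lemma~\ref{L1} together with the inner product \eqref{IQP} yields the quaternionic Parseval identity $\|f\|_{2,\scriptscriptstyle\mathbb{H},\chi_1,\chi_2}^2=\langle f;f\rangle=\sum_{n,m}c_{n,m}\overline{c_{n,m}}=\sum_{n,m}|c_{n,m}|_{\scriptscriptstyle\mathbb{H}}^2$. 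It therefore suffices to show that $\mathcal{F}\{f\}$ admits an expansion $\sum_{n,m}d_{n,m}\,\mathcal{H}_{n,m}^{\chi_1,\chi_2}$ with $|d_{n,m}|_{\scriptscriptstyle\mathbb{H}}=|c_{n,m}|_{\scriptscriptstyle\mathbb{H}}$ for all $n,m$.

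The central step is to compute $\mathcal{F}\{c\,\mathcal{H}_{n,m}^{\chi_1,\chi_2}\}$ for a fixed constant $c\in\mathbb{H}$, taking care that $c$ does \emph{not} in general commute with the kernels. Using the tensor factorization \eqref{CE}, I apply $\mathcal{D}_{\chi_1}^{\theta_1,\textbf{\textit{a}}}$ first; since its kernel acts on the left and $h_n^{\chi_1},h_m^{\chi_2}$ are scalar, the one-dimensional spectral relation \eqref{SR} (with the imaginary unit $\textbf{\textit{i}}$ replaced by $\textbf{\textit{a}}$, which is legitimate because $\mathbb{R}\oplus\mathbb{R}\textbf{\textit{a}}\cong\mathbb{C}$) produces a factor $e^{\textbf{\textit{a}}n\theta_1}$ standing to the \emph{left} of $c$. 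Applying $\mathcal{D}_{\chi_2}^{\theta_2,\textbf{\textit{b}}}$ next, whose kernel acts on the right, then produces a factor $e^{\textbf{\textit{b}}m\theta_2}$ to the right. Tracking the normalizing constant $c^{\theta_2}_{\chi_2}$ carefully, the net outcome has the form $\mathcal{F}\{c\,\mathcal{H}_{n,m}^{\chi_1,\chi_2}\}=d_{n,m}\,\mathcal{H}_{n,m}^{\chi_1,\chi_2}$, where the coefficient $d_{n,m}$ is obtained from $c$ by left multiplication by $e^{\textbf{\textit{a}}n\theta_1}$, right multiplication by $e^{\textbf{\textit{b}}m\theta_2}$, and conjugation by $c^{\theta_2}_{\chi_2}$. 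Since $|e^{\textbf{\textit{a}}n\theta_1}|_{\scriptscriptstyle\mathbb{H}}=|e^{\textbf{\textit{b}}m\theta_2}|_{\scriptscriptstyle\mathbb{H}}=1$, conjugation preserves the modulus, and $|\cdot|_{\scriptscriptstyle\mathbb{H}}$ is multiplicative, we conclude $|d_{n,m}|_{\scriptscriptstyle\mathbb{H}}=|c|_{\scriptscriptstyle\mathbb{H}}$, even though $d_{n,m}\neq\lambda_{n,m}c$ in general.

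Taking $c=c_{n,m}$ and summing gives $\mathcal{F}\{f\}=\sum_{n,m}d_{n,m}\,\mathcal{H}_{n,m}^{\chi_1,\chi_2}$ with $|d_{n,m}|_{\scriptscriptstyle\mathbb{H}}=|c_{n,m}|_{\scriptscriptstyle\mathbb{H}}$, and a second application of Parseval yields $\|\mathcal{F}\{f\}\|_{2,\scriptscriptstyle\mathbb{H},\chi_1,\chi_2}^2=\sum_{n,m}|d_{n,m}|_{\scriptscriptstyle\mathbb{H}}^2=\sum_{n,m}|c_{n,m}|_{\scriptscriptstyle\mathbb{H}}^2=\|f\|_{2,\scriptscriptstyle\mathbb{H},\chi_1,\chi_2}^2$, which is the claim. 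To make the term-by-term application of $\mathcal{F}$ rigorous, I would first prove the identity on the dense subspace of finite linear combinations of the $\mathcal{H}_{n,m}^{\chi_1,\chi_2}$, where all sums are finite, and then extend $\mathcal{F}$ to all of $L^2_{\chi_1,\chi_2}(\mathbb{R}^2;\mathbb{H})$ by continuity, the isometry persisting under the limit. I expect the main obstacle to be precisely the non-commutative bookkeeping in the middle paragraph: one must verify that the left-acting and right-acting eigenvalues genuinely sandwich $c_{n,m}$ so that $\mathcal{F}\{c_{n,m}\mathcal{H}_{n,m}^{\chi_1,\chi_2}\}$ stays a quaternion multiple of $\mathcal{H}_{n,m}^{\chi_1,\chi_2}$, and that the constants $c^{\theta_i}_{\chi_i}$ enter only through modulus-preserving operations; once this is confirmed, multiplicativity of $|\cdot|_{\scriptscriptstyle\mathbb{H}}$ closes the argument.
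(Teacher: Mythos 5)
Your proposal is correct and follows essentially the same route as the paper: expand $f$ in the tensorized generalized Hermite basis of Lemma~\ref{L1}, observe that the transform sends $c_{n,m}\mathcal{H}_{n,m}^{\chi_1,\chi_2}$ to $e^{\textbf{\textit{a}}n\theta_1}c_{n,m}e^{\textbf{\textit{b}}m\theta_2}\mathcal{H}_{n,m}^{\chi_1,\chi_2}$, and conclude by unimodularity of the quaternionic exponentials and multiplicativity of $|\cdot|_{\scriptscriptstyle\mathbb{H}}$. Your extra care with the non-commutative sandwich structure and the density/continuity step only makes the argument more rigorous than the paper's version (the ``conjugation by $c^{\theta_2}_{\chi_2}$'' you mention is not actually needed, since that constant is already absorbed into the eigenvalue, but it is harmless as conjugation preserves the modulus).
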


\begin{proof}
By Lemma~\ref{L1}, the family \(\{ \mathcal{H}_{n,m}^{\chi_1,\chi_2} \}_{n,m \in \mathbb{N}}\) forms a complete orthonormal basis of \(L^2_{\chi_1,\chi_2}(\mathbb{R}^2; \mathbb{H})\) that diagonalizes the operator \(\mathcal{F}_{\chi_1,\chi_2}^{\theta_1,\theta_2; -\textbf{\textit{a}}, -\textbf{\textit{b}}}\), with corresponding eigenvalues \(e^{\textbf{\textit{a}} n \theta_1} e^{\textbf{\textit{b}} m \theta_2}\).

As a result, any \(f \in L^2_{\chi_1,\chi_2}(\mathbb{R}^2; \mathbb{H})\) admits the spectral expansion
\[
f = \sum_{n,m=0}^\infty c_{n,m} \, \mathcal{H}_{n,m}^{\chi_1,\chi_2},
\]
where the coefficients \(c_{n,m} \in \mathbb{H}\) satisfy
\[
\| f \|_{2,\scriptscriptstyle \mathbb{H}, \chi_1,\chi_2}^2 = \sum_{n,m=0}^\infty |c_{n,m}|_{\scriptscriptstyle \mathbb{H}}^2.
\]

Applying the fractional quaternionic Dunkl transform yields:
\[
\mathcal{F}_{\chi_1,\chi_2}^{\theta_1,\theta_2; -\textbf{\textit{a}}, -\textbf{\textit{b}}} \{ f \}(x)
= \sum_{n=0}^{\infty} \sum_{m=0}^{\infty} e^{\textbf{\textit{a}} n \theta_1} \, c_{n,m} \, e^{\textbf{\textit{b}} m \theta_2} \, \mathcal{H}^{\chi_1,\chi_2}_{n,m}(x).
\]

Hence,
\begin{align*}
\left\| \mathcal{F}_{\chi_1,\chi_2}^{\theta_1,\theta_2; -\textbf{\textit{a}}, -\textbf{\textit{b}}}\{f\} \right\|_{2, \scriptscriptstyle \mathbb{H}, \chi_1, \chi_2}^2
&= \sum_{n=0}^{\infty} \sum_{m=0}^{\infty} \left| e^{\textbf{\textit{a}} n \theta_1} \, c_{n,m} \, e^{\textbf{\textit{b}} m \theta_2} \right|_{\scriptscriptstyle \mathbb{H}}^2 \\
&= \sum_{n=0}^{\infty} \sum_{m=0}^{\infty} \left| c_{n,m} \right|_{\scriptscriptstyle \mathbb{H}}^2,
\end{align*}
where we used the fact that quaternionic exponentials are unimodular: \( |e^{\textbf{\textit{a}} n \theta_1}|_{\scriptscriptstyle \mathbb{H}} = |e^{\textbf{\textit{b}} m \theta_2}|_{\scriptscriptstyle \mathbb{H}} = 1 \).

Therefore,
\[
\left\| \mathcal{F}_{\chi_1,\chi_2}^{\theta_1,\theta_2; -\textbf{\textit{a}}, -\textbf{\textit{b}}} \{ f \} \right\|_{2, \scriptscriptstyle \mathbb{H}, \chi_1, \chi_2}^2
= \left\| f \right\|_{2, \scriptscriptstyle \mathbb{H}, \chi_1, \chi_2}^2.
\]

\end{proof}
In light of the established framework for the fractional quaternionic Dunkl transform, it is pertinent to examine its explicit action on well-structured classes of functions. Of particular interest are functions admitting a decomposition into products of radial components and quaternionic Dunkl harmonics, which play a fundamental role in this setting. The Bochner-type formula presented below furnishes a detailed characterization of the transform’s behavior on such functions. This formula elucidates the intricate interplay between radiality and harmonicity within the transform and constitutes a natural generalization of classical Bochner identities to the fractional quaternionic context.

\begin{theorem}[\textbf{Bochner-type formula}]\label{BT}
Let \( f \in L^1_{\chi_1,\chi_2}(\mathbb{R}^2;\mathbb{H}) \cap L^2_{\chi_1,\chi_2}(\mathbb{R}^2;\mathbb{H}) \) be a function of the form
\[
f(x_1, x_2) = p(x_1, x_2)\, \psi_1(|x_1|)\, \psi_2(|x_2|),
\]
where
\begin{itemize}
    \item \( p(x_1, x_2) \) is a quaternionic-valued polynomial, homogeneous of bidegree \( (r_1,r_2) \), which is \( \chi_1 \)-Dunkl harmonic in \( x_1 \) and \( \chi_2 \)-Dunkl harmonic in \( x_2 \);
    \item \( \psi_1, \psi_2 : \mathbb{R}_+ \to \mathbb{R} \) are real-valued radial functions.
\end{itemize}
Then, the FrQDT satisfies
\begin{equation}\label{BT1}
\mathcal{F}_{\chi_1, \chi_2}^{\theta_1, \theta_2;\, -\textbf{\textit{a}}, -\textbf{\textit{b}}}\{f\}
=
e^{\textbf{\textit{a}} r_1 \theta_1}\,  p(x_1, x_2) \,
H_{r_1 + \chi_1 - \frac{1}{2}}^{\theta_1} \psi_1(|x_1|)
\, e^{\textbf{\textit{b}} r_2 \theta_2}
\,H_{r_2 + \chi_2 - \frac{1}{2}}^{\theta_2} \psi_2(|x_2|),
\end{equation}
where, for \( i=1,2 \), \( H^{\theta_i}_\nu \) denotes the fractional Hankel transform of order \( \nu \) and angle \( \theta_i \).

In particular, if \( f \) is radial, then
\begin{equation}\label{BT2}
\mathcal{F}_{\chi_1, \chi_2}^{\theta_1,\theta_2;\, -\textbf{\textit{a}}, -\textbf{\textit{b}}}\{f\} =
H_{\chi_1 - \frac{1}{2}}^{\theta_1} \psi_1(|x_1|)\, H_{\chi_2 - \frac{1}{2}}^{\theta_2} \psi_2(|x_2|).
\end{equation}
\end{theorem}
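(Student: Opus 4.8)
The plan is to reduce the two-dimensional statement to two successive applications of the one-dimensional Dunkl--Bochner identity \eqref{BIDFT}, exploiting the tensor factorization \eqref{CE}. First I would write $\mathcal{F}_{\chi_1,\chi_2}^{\theta_1,\theta_2;-\textbf{\textit{a}},-\textbf{\textit{b}}} = \mathcal{D}_{\chi_2}^{\theta_2,\textbf{\textit{b}}}\circ\mathcal{D}_{\chi_1}^{\theta_1,\textbf{\textit{a}}}$, so that the computation splits into a transform acting on the first variable (with the kernel $E^{\textbf{\textit{a}}}$ multiplying from the left) followed by one acting on the second variable (with the kernel $E^{\textbf{\textit{b}}}$ multiplying from the right). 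Because each rank-one harmonic space $\mathcal{H}_{r_i}^{\chi_i}$ is one-dimensional, spanned by a single \emph{real} polynomial $\phi_{r_i}$, the bidegree and harmonicity hypotheses let me write $p(x_1,x_2) = \phi_{r_1}(x_1)\,\phi_{r_2}(x_2)\,q$ with $q\in\mathbb{H}$ constant. The real homogeneous factors $\phi_{r_i}$ then commute with everything, so only the single quaternion $q$ has to be carried through the calculation.

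Next I would apply \eqref{BIDFT} in the first variable, treating $x_2$ and $q$ as parameters. The one-dimensional quaternionic transform $\mathcal{D}_{\chi_1}^{\theta_1,\textbf{\textit{a}}}$ is obtained from the complex transform $D_{\chi_1}^{\theta_1}$ through the real-algebra isomorphism $\mathbb{R}\oplus\mathbb{R}\textbf{\textit{a}}\cong\mathbb{C}$ sending $i\mapsto\textbf{\textit{a}}$; since the kernel $E^{\textbf{\textit{a}}}$ and the scalar $c_{\chi_1}^{\theta_1}$ both take values in this commutative copy of $\mathbb{C}$, Theorem~\ref{BIDT} transfers directly and produces the spectral factor $e^{\textbf{\textit{a}} r_1\theta_1}$ together with the replacement of $\psi_1$ by its fractional Hankel transform $H_{r_1+\chi_1-\frac{1}{2}}^{\theta_1}\psi_1$, these factors emerging on the left of $q$ because $E^{\textbf{\textit{a}}}$ acts from the left. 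I would then repeat the argument in the second variable with $i\mapsto\textbf{\textit{b}}$, obtaining $e^{\textbf{\textit{b}} r_2\theta_2}$ and $H_{r_2+\chi_2-\frac{1}{2}}^{\theta_2}\psi_2$ attached on the right of $q$ since $E^{\textbf{\textit{b}}}$ acts from the right. Collecting the two steps and restoring $p=\phi_{r_1}\phi_{r_2}q$ yields \eqref{BT1}, while the radial identity \eqref{BT2} is the specialization $r_1=r_2=0$, $p\equiv 1$.

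Two points require care. The first is routine: Fubini's theorem must be invoked to separate the double integral into the product of the two one-dimensional integrals, which is justified by $f\in L^1_{\chi_1,\chi_2}$ together with the uniform kernel bound \eqref{BK1}. The second point is the main obstacle, namely the non-commutativity of $\mathbb{H}$: the $\textbf{\textit{a}}$-valued quantities $e^{\textbf{\textit{a}} r_1\theta_1}$ and $H^{\theta_1}_{r_1+\chi_1-\frac{1}{2}}\psi_1$ commute neither with the $\textbf{\textit{b}}$-valued factors nor with $q$, so the placement of first-variable factors on one side of $q$ and second-variable factors on the other must be tracked explicitly as the prefactors $c_{\chi_1}^{\theta_1}c_{\chi_2}^{\theta_2}$ are pushed through the two integrals. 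It is precisely the real-valuedness of the homogeneous factors $\phi_{r_i}$ and of the radial profiles $\psi_i$ that permits the two commutative pieces, generated by $\textbf{\textit{a}}$ and by $\textbf{\textit{b}}$ respectively, to be regrouped into the single ordered expression displayed in \eqref{BT1}.
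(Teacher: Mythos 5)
Your proposal follows the same skeleton as the paper's proof: factor the FrQDT as $\mathcal{D}_{\chi_2}^{\theta_2,\textbf{\textit{b}}}\circ\mathcal{D}_{\chi_1}^{\theta_1,\textbf{\textit{a}}}$ via \eqref{CE}, apply the scalar Bochner identity \eqref{BIDFT} in each variable through the commutative subalgebras $\mathbb{R}\oplus\mathbb{R}\textbf{\textit{a}}$ and $\mathbb{R}\oplus\mathbb{R}\textbf{\textit{b}}$, and isolate the real-valued parts so that the non-commutativity can be tracked. Where you genuinely diverge is in the treatment of the quaternionic polynomial $p$: the paper keeps $p$ intact through the first transform and then splits it into its four real components $p_0,\dots,p_3$, applying $\mathcal{D}_{\chi_2}^{\theta_2,\textbf{\textit{b}}}$ componentwise; you instead invoke the one-dimensionality of the rank-one harmonic spaces to write $p=\phi_{r_1}(x_1)\phi_{r_2}(x_2)\,q$ with $\phi_{r_i}$ real and $q\in\mathbb{H}$ a single constant. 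In the present $\mathbb{Z}_2\times\mathbb{Z}_2$ setting your factorization is correct (a bi-homogeneous, bi-harmonic polynomial is a scalar multiple of $x_1^{r_1}x_2^{r_2}$, so the two decompositions are equivalent, with $p_\ell=c_\ell\phi_{r_1}\phi_{r_2}$), and it makes the bookkeeping cleaner; the paper's componentwise argument is the one that would survive in a higher-rank generalization where $\mathcal{H}_r^\chi$ is no longer one-dimensional. One caveat you should make explicit rather than assert: your own calculation delivers the constant $q$ to the \emph{right} of the $\textbf{\textit{a}}$-valued factor $H^{\theta_1}_{r_1+\chi_1-\frac12}\psi_1$, i.e.\ the output is $e^{\textbf{\textit{a}} r_1\theta_1}\phi_{r_1}\,H^{\theta_1}\psi_1\cdot\phi_{r_2}\,q\cdots$, whereas \eqref{BT1} displays the quaternionic $p$ to the \emph{left} of $H^{\theta_1}\psi_1$; these agree only modulo a commutation of $q$ with $\mathbb{R}\oplus\mathbb{R}\textbf{\textit{a}}$ that is exactly what the proof is supposed to avoid. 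The same remark applies to the placement of the $\textbf{\textit{b}}$-valued constant $c^{\theta_2}_{\chi_2}$, which the definition puts on the far left of the second integral. The paper's proof glosses over the identical point (it asserts the first-step identity with $p$ sandwiched between the two $\textbf{\textit{a}}$-valued factors without justification), so this is not a defect specific to your argument, but since you announce the ordering as the main obstacle you should carry the computation to the end and state precisely which side of $q$ each factor lands on.
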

\begin{proof}
Let \( f \in L^1_{\chi_1, \chi_2}(\mathbb{R}^2;\mathbb{H}) \cap L^2_{\chi_1, \chi_2}(\mathbb{R}^2;\mathbb{H}) \) be of the form
\[
f(x_1, x_2) = p(x_1, x_2)\, \psi_1(|x_1|)\, \psi_2(|x_2|),
\]
where \( p \) is a quaternionic-valued polynomial, homogeneous of bidegree \((r_1,r_2)\).

Applying the tensor factorization formula \eqref{CE}, we first perform the fractional Dunkl transform in the \( x_1 \)-variable, where the classical Bochner-type identity \eqref{BIDFT} applies:

\[
\mathcal{D}_{\chi_1}^{\theta_1, \textbf{\textit{a}}} f(x_1, x_2) 
= e^{\textbf{\textit{a}} r_1 \theta_1} p(x_1, x_2) H_{r_1 + \chi_1 - \frac{1}{2}}^{\theta_1} \psi_1(|x_1|) \psi_2(|x_2|).
\]

Since the fractional Hankel transform 
\( H_{r_1 + \chi_1 - \frac{1}{2}}^{\theta_1} \psi_1(|x_1|) \) 
is quaternion-valued and generally does not commute with the quaternionic polynomial \( p \), 
we decompose \( p \) into its real scalar components:
\[
p(x_1, x_2) = p_0(x_1, x_2) + \textbf{\textit{i}}\, p_1(x_1, x_2) + \textbf{\textit{j}}\, p_2(x_1, x_2) + \textbf{\textit{k}}\, p_3(x_1, x_2).
\]

We then apply the fractional Dunkl transform 
\( \mathcal{D}_{\chi_2}^{\theta_2, \textbf{\textit{b}}} \) 
linearly and component-wise on each scalar function 
\( p_\ell(x_1, x_2) \psi_2(|x_2|) \),
thus preserving the quaternionic structure without requiring commutation between 
\( p \) and 
\( H_{r_1 + \chi_1 - \frac{1}{2}}^{\theta_1} \psi_1(|x_1|) \).

Hence,
\begin{align*}
&p(x_1, x_2) H_{r_1 + \chi_1 - \frac{1}{2}}^{\theta_1} \psi_1(|x_1|) \psi_2(|x_2|) \\
&= p_0(x_1, x_2) H_{r_1 + \chi_1 - \frac{1}{2}}^{\theta_1} \psi_1(|x_1|) \psi_2(|x_2|) 
+ \textbf{\textit{i}}\, H_{r_1 + \chi_1 - \frac{1}{2}}^{\theta_1} \psi_1(|x_1|) p_1(x_1, x_2) \psi_2(|x_2|) \\
&+ \textbf{\textit{j}}\, H_{r_1 + \chi_1 - \frac{1}{2}}^{\theta_1} \psi_1(|x_1|) p_2(x_1, x_2) \psi_2(|x_2|) 
+ \textbf{\textit{k}}\, H_{r_1 + \chi_1 - \frac{1}{2}}^{\theta_1} \psi_1(|x_1|) p_3(x_1, x_2) \psi_2(|x_2|).
\end{align*}

Therefore,
\begin{align*}
&\mathcal{D}_{\chi_2}^{\theta_2, \textbf{\textit{b}}} \left[
p(x_1, x_2) H_{r_1 + \chi_1 - \frac{1}{2}}^{\theta_1} \psi_1(|x_1|) \psi_2(|x_2|)
\right] \\
&= \mathcal{D}_{\chi_2}^{\theta_2, \textbf{\textit{b}}} \left[ p_0(x_1, x_2) \psi_2(|x_2|) \right] H_{r_1 + \chi_1 - \frac{1}{2}}^{\theta_1} \psi_1(|x_1|) \\
&\quad + \textbf{\textit{i}}\, H_{r_1 + \chi_1 - \frac{1}{2}}^{\theta_1} \psi_1(|x_1|) \mathcal{D}_{\chi_2}^{\theta_2, \textbf{\textit{b}}} \left[ p_1(x_1, x_2) \psi_2(|x_2|) \right] \\
&\quad + \textbf{\textit{j}}\, H_{r_1 + \chi_1 - \frac{1}{2}}^{\theta_1} \psi_1(|x_1|) \mathcal{D}_{\chi_2}^{\theta_2, \textbf{\textit{b}}} \left[ p_2(x_1, x_2) \psi_2(|x_2|) \right] \\
&\quad + \textbf{\textit{k}}\, H_{r_1 + \chi_1 - \frac{1}{2}}^{\theta_1} \psi_1(|x_1|) \mathcal{D}_{\chi_2}^{\theta_2, \textbf{\textit{b}}} \left[ p_3(x_1, x_2) \psi_2(|x_2|) \right].
\end{align*}

Applying the fractional Dunkl transform \( \mathcal{D}_{\chi_2}^{\theta_2, \textbf{\textit{b}}} \) in the \(x_2\)-variable, the classical Bochner-type identity \eqref{BIDFT} yields, for each real-valued component \( p_\ell(x_1, x_2) \):
\[
\mathcal{D}_{\chi_2}^{\theta_2, \textbf{\textit{b}}} \left[ p_\ell(x_1, x_2) \psi_2(|x_2|) \right] 
= e^{\textbf{\textit{b}} r_2 \theta_2} p_\ell(x_1, x_2) H_{r_2 + \chi_2 - \frac{1}{2}}^{\theta_2} \psi_2(|x_2|).
\]

Replacing all components, we have
\begin{align*}
&\mathcal{D}_{\chi_2}^{\theta_2, \textbf{\textit{b}}} \left[
p(x_1, x_2) H_{r_1 + \chi_1 - \frac{1}{2}}^{\theta_1} \psi_1(|x_1|) \psi_2(|x_2|)
\right] \\
&= p_0(x_1, x_2) H_{r_1 + \chi_1 - \frac{1}{2}}^{\theta_1} \psi_1(|x_1|) e^{\textbf{\textit{b}} r_2 \theta_2} H_{r_2 + \chi_2 - \frac{1}{2}}^{\theta_2} \psi_2(|x_2|) \\
&\quad + \textbf{\textit{i}}\, p_1(x_1, x_2) H_{r_1 + \chi_1 - \frac{1}{2}}^{\theta_1} \psi_1(|x_1|) e^{\textbf{\textit{b}} r_2 \theta_2} H_{r_2 + \chi_2 - \frac{1}{2}}^{\theta_2} \psi_2(|x_2|) \\
&\quad + \textbf{\textit{j}}\, p_2(x_1, x_2) H_{r_1 + \chi_1 - \frac{1}{2}}^{\theta_1} \psi_1(|x_1|) e^{\textbf{\textit{b}} r_2 \theta_2} H_{r_2 + \chi_2 - \frac{1}{2}}^{\theta_2} \psi_2(|x_2|) \\
&\quad + \textbf{\textit{k}}\, p_3(x_1, x_2) H_{r_1 + \chi_1 - \frac{1}{2}}^{\theta_1} \psi_1(|x_1|) e^{\textbf{\textit{b}} r_2 \theta_2} H_{r_2 + \chi_2 - \frac{1}{2}}^{\theta_2} \psi_2(|x_2|).
\end{align*}

Factoring out \( H_{r_1 + \chi_1 - \frac{1}{2}}^{\theta_1} \psi_1(|x_1|)
\, e^{\textbf{\textit{b}} r_2 \theta_2} H_{r_2 + \chi_2 - \frac{1}{2}}^{\theta_2} \psi_2(|x_2|) \) from all terms yields the final expression

\[
\mathcal{F}_{\chi_1, \chi_2}^{\theta_1, \theta_2; \, -\textbf{\textit{a}}, -\textbf{\textit{b}}} \{ f \} (x_1, x_2)
= e^{\textbf{\textit{a}} r_1 \theta_1} p(x_1, x_2) H_{r_1 + \chi_1 - \frac{1}{2}}^{\theta_1} \psi_1(|x_1|)
\, e^{\textbf{\textit{b}} r_2 \theta_2} H_{r_2 + \chi_2 - \frac{1}{2}}^{\theta_2} \psi_2(|x_2|).
\]

Note that, due to quaternionic non-commutativity, the order of quaternionic exponentials relative to polynomial and transform factors is essential and must be preserved as above.

In particular, if \( f \) is radial, then a straightforward application of equation \eqref{BIDFTR} and similar reasoning yields the simplified expression.

\end{proof}

\begin{remark}
It is worth noting that the Bochner-type formula established above can also be extended to the case where \(p\) is a real-valued polynomial, while the functions \(\psi_1\) and \(\psi_2\) are quaternionic-valued radial functions. In this setting, the quaternionic structure is entirely carried by the radial functions, and the polynomial part remains commutative, thus simplifying the analysis.

However, due to the intrinsic non-commutativity of quaternions, it is not possible to simultaneously consider both \(p\) and the radial functions \(\psi_1, \psi_2\) as quaternionic-valued without imposing additional structural constraints. The interplay between quaternionic-valued polynomials and quaternionic-valued radial functions complicates the ordering of factors in the transform and prevents a straightforward generalization of the Bochner-type formula in that fully quaternionic setting.
\end{remark}

In order to apply the Bochner-type theorem to explicit radial examples, it is necessary to determine the action of the fractional Hankel transform on the Gaussian function. The following lemma provides this computation, forming the analytic backbone of the analysis that follows.
\begin{lemma}[\textbf{Fractional Hankel transform of a Gaussian}]\label{lem:frac-hankel-gaussian}
Let $\chi > -1$, $\alpha > 0$, and $0 < \theta < \pi$. Consider the fractional Hankel transform $H_\chi^\theta$ defined for suitable functions by
\begin{equation} \label{HFT}
H_\chi^\theta f(y) = 2c_\chi^\theta \int_0^{+\infty} e^{-\frac{i}{2}(x^2 + y^2) \cot \theta}\; j_\chi\left( \frac{xy}{\sin \theta} \right) f(x)\; x^{2\chi+1}\,dx,
\end{equation}
where $c_\chi^\theta$ is a normalization constant and $j_\chi$ is the normalized spherical Bessel function
\[
j_\chi(z) = \Gamma(\chi + 1) \sum_{n=0}^\infty \frac{(-1)^n (z/2)^{2n}}{n! \Gamma(n+\chi+1)}.
\]
Then, for any $\alpha > 0$, the Gaussian function $g(x) = e^{-\alpha x^2}$ satisfies
\begin{equation} \label{fractional-hankel-of-gaussian-reversed}
H_\chi^\theta \left[ e^{-\alpha x^2} \right] (y) =
c_\chi^\theta \, \Gamma(\chi + 1) \left( \alpha + \frac{i}{2}\cot \theta \right)^{-\chi - 1}
\exp\left( - \frac{i}{2} y^2 \cot \theta - \frac{y^2}{4 (\alpha + \frac{i}{2}\cot \theta)\sin^2 \theta} \right).
\end{equation}
\end{lemma}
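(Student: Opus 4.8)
The plan is to evaluate the defining integral directly by inserting the series expansion of the normalized Bessel function $j_\chi$ and integrating term by term against the Gaussian weight. First I would substitute $z = xy/\sin\theta$ into the series definition of $j_\chi$ and combine the two Gaussian factors, writing the integrand as
\[
e^{-\frac{i}{2}(x^2+y^2)\cot\theta}\,\Gamma(\chi+1)\sum_{n=0}^{\infty}\frac{(-1)^n}{n!\,\Gamma(n+\chi+1)}\left(\frac{xy}{2\sin\theta}\right)^{2n} e^{-\alpha x^2}\,x^{2\chi+1}.
\]
Pulling out the factors that do not depend on $x$ and exchanging sum and integral (justified by absolute convergence since $\alpha>0$ forces Gaussian decay), the computation reduces to evaluating, for each $n$, the moment integral $\int_0^{\infty} x^{2n+2\chi+1} e^{-\beta x^2}\,dx$ with $\beta := \alpha + \tfrac{i}{2}\cot\theta$. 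The real part of $\beta$ is $\alpha>0$, so this integral converges and equals $\tfrac{1}{2}\beta^{-(n+\chi+1)}\Gamma(n+\chi+1)$ by the standard Gamma-function formula (extended to complex $\beta$ with positive real part by analytic continuation).

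Next I would substitute this evaluation back into the series. The key simplification is that the factor $\Gamma(n+\chi+1)$ coming from the moment integral cancels exactly against the $\Gamma(n+\chi+1)$ in the denominator of the Bessel series, leaving a clean geometric-type series in $n$. Collecting the constant prefactor $2c_\chi^\theta\cdot\tfrac12\cdot\Gamma(\chi+1)\,\beta^{-(\chi+1)}$ and the remaining Gaussian-in-$y$ factor $e^{-\frac{i}{2}y^2\cot\theta}$, the sum over $n$ becomes
\[
\sum_{n=0}^{\infty}\frac{1}{n!}\left(-\frac{y^2}{4\beta\sin^2\theta}\right)^{n}
=\exp\!\left(-\frac{y^2}{4\beta\sin^2\theta}\right).
\]
Recognizing this exponential series and multiplying through by the collected prefactors yields exactly the claimed closed form \eqref{fractional-hankel-of-gaussian-reversed}, with $\beta^{-(\chi+1)} = (\alpha+\tfrac{i}{2}\cot\theta)^{-\chi-1}$ and the two exponential-in-$y$ contributions combining into the stated argument.

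The main obstacle I anticipate is not the algebra but the careful bookkeeping of the complex parameter $\beta = \alpha + \tfrac{i}{2}\cot\theta$ and the justification of term-by-term integration. One must confirm that the Gamma-integral formula $\int_0^\infty x^{s-1}e^{-\beta x^2}\,dx = \tfrac12\beta^{-s/2}\Gamma(s/2)$ remains valid for complex $\beta$ with $\Re(\beta)>0$, which follows by analytic continuation from the real case since both sides are holomorphic in the right half-plane and agree on the positive reals, together with a consistent choice of branch for $\beta^{-\chi-1}$ (the principal branch, well-defined because $\beta$ lies in the right half-plane). The interchange of summation and integration is then justified by dominated convergence: replacing each term by its absolute value bounds the series by the convergent expansion of $j_\chi\!\big(i\,|y|x/\sin\theta\big)$ against the integrable envelope $e^{-\alpha x^2}x^{2\chi+1}$, where $\chi>-1$ guarantees integrability near the origin. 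Once these analytic points are secured, the identity is a direct consequence of the two term-by-term cancellations described above.
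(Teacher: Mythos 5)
Your proposal is correct and follows essentially the same route as the paper's proof: expand $j_\chi$ as a power series, absorb the chirp factor into a complex Gaussian parameter $\beta=\alpha+\tfrac{i}{2}\cot\theta$, integrate term by term using the Gamma-moment formula, cancel the $\Gamma(n+\chi+1)$ factors, and resum the exponential series. The only difference is that you supply the analytic justifications (validity of the Gamma integral for complex $\beta$ with $\Re\beta>0$, branch choice, and dominated convergence for the sum--integral interchange) that the paper's proof leaves implicit.
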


\begin{proof}
Let $g(x) = e^{-\alpha x^2}$. Plugging into the reversed version of \eqref{HFT} yields
\[
H_\chi^\theta g(y) = 2c_\chi^\theta \int_0^{+\infty} e^{-\frac{i}{2}(x^2 + y^2)\cot\theta}\, j_\chi\left( \frac{xy}{\sin\theta} \right) e^{-\alpha x^2} x^{2\chi+1} dx.
\]
Group the exponentials:
\[
e^{-\frac{i}{2}(x^2 + y^2)\cot\theta} e^{-\alpha x^2}
= e^{-\frac{i}{2}y^2\cot\theta} e^{-x^2\left( \alpha + \frac{i}{2} \cot\theta \right)}.
\]
Set $A := \alpha + \frac{i}{2} \cot\theta$, so
\[
H_\chi^\theta g(y) = 2c_\chi^\theta\, e^{-\frac{i}{2}y^2 \cot\theta} 
\int_0^{+\infty} j_\chi\left( \frac{xy}{\sin\theta} \right) e^{-A x^2} x^{2\chi+1} dx.
\]
Expand $j_\chi$:
\[
j_\chi(z) = \Gamma(\chi+1) \sum_{n=0}^\infty \frac{(-1)^n (z/2)^{2n}}{n! \Gamma(n + \chi + 1)},
\]
so
\begin{align*}
&\int_0^{+\infty} j_\chi\left( \frac{xy}{\sin\theta} \right) e^{-A x^2} x^{2\chi+1} dx
= \Gamma(\chi+1) \sum_{n=0}^{\infty} \frac{(-1)^n (y/2\sin\theta)^{2n}}{n! \Gamma(n+\chi+1)}
   \int_0^{+\infty} x^{2n+2\chi+1} e^{-A x^2} dx .
\end{align*}
The inner integral is
\[
\int_0^\infty x^{2n + 2\chi + 1} e^{-A x^2} dx = \frac{1}{2} A^{-n - \chi - 1} \Gamma( n + \chi + 1 ).
\]
Thus, the sum gives
\begin{align*}
&\Gamma(\chi+1) \sum_{n=0}^{\infty} \frac{(-1)^n (y/2\sin\theta)^{2n}}{n! \Gamma(n+\chi+1)} \frac{1}{2} A^{-n - \chi - 1} \Gamma( n + \chi + 1 ) \\
&= \frac{1}{2} \Gamma(\chi+1) A^{-\chi-1} \sum_{n=0}^{\infty} \frac{1}{n!} \left( - \frac{y^2}{4A\sin^2 \theta} \right)^n \\
&= \frac{1}{2} \Gamma(\chi+1) A^{-\chi-1} \exp\left( -\frac{y^2}{4A\sin^2 \theta} \right ).
\end{align*}
Therefore,
\[
H_\chi^\theta g(y) = 2 c_\chi^\theta\, e^{-\frac{i}{2} y^2 \cot\theta } \left[ \frac{1}{2} \Gamma(\chi+1) A^{-\chi-1} \exp\left( -\frac{y^2}{4A\sin^2\theta} \right ) \right]
\]
\[
= c_\chi^\theta\, \Gamma(\chi+1) \left( \alpha + \frac{i}{2} \cot\theta \right )^{-\chi-1}
\exp\left( -\frac{i}{2} y^2\cot\theta 
- \frac{y^2}{4 (\alpha+ \frac{i}{2}\cot\theta ) \sin^2\theta }\right ).
\]

\end{proof}
Equipped with the expression for the image of the Gaussian under the fractional Hankel transform, we can now state an explicit corollary describing the action of the fractional quaternionic Dunkl transform on the Gaussian.

\begin{corollary}
Let \( f : \mathbb{R}^2 \to \mathbb{H} \) be the quaternion-valued function defined by
\[
f(x) = e^{-\alpha |x|^2},
\]
where \( \alpha > 0 \). Then, the image of \( f \) under the FrQDT is given explicitly by
\begin{align*}
& \mathcal{F}_{\chi_1, \chi_2}^{\theta_1, \theta_2; \, -\textbf{\textit{a}}, -\textbf{\textit{b}}} \{ f \}(y_1, y_2) \\
&= \left[
c_{\chi_1 - \frac{1}{2}}^{\theta_1} \, \Gamma\left( \chi_1 + \frac{1}{2} \right) 
\left( \alpha + \frac{\textbf{\textit{a}}}{2} \cot \theta_1 \right)^{ -\chi_1 - \frac{1}{2} } 
\exp\left( -\frac{\textbf{\textit{a}}}{2} y_1^2 \cot \theta_1 - \frac{y_1^2}{4 \left( \alpha + \frac{\textbf{\textit{a}}}{2} \cot \theta_1 \right) \sin^2 \theta_1} \right)
\right] \\
&\quad \times \left[
c_{\chi_2 - \frac{1}{2}}^{\theta_2} \, \Gamma\left( \chi_2 + \frac{1}{2} \right) 
\left( \alpha + \frac{\textbf{\textit{b}}}{2} \cot \theta_2 \right)^{ -\chi_2 - \frac{1}{2} } 
\exp\left( -\frac{\textbf{\textit{b}}}{2} y_2^2 \cot \theta_2 - \frac{y_2^2}{4 \left( \alpha + \frac{\textbf{\textit{b}}}{2} \cot \theta_2 \right) \sin^2 \theta_2} \right)
\right].
\end{align*}
\end{corollary}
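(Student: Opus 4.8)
The plan is to exploit the radial tensor structure of the Gaussian and reduce the computation to the one-dimensional fractional Hankel transform already handled in Lemma~\ref{lem:frac-hankel-gaussian}. First I would observe that $|x|^2 = x_1^2 + x_2^2$, so that $f(x) = e^{-\alpha|x|^2} = e^{-\alpha x_1^2}\,e^{-\alpha x_2^2}$ factors as a product of a radial function of $x_1$ and a radial function of $x_2$. In the language of the Bochner-type formula (Theorem~\ref{BT}), this is exactly the radial case with constant polynomial $p \equiv 1$ and bidegree $(r_1,r_2)=(0,0)$, taking $\psi_1(|x_1|)=e^{-\alpha x_1^2}$ and $\psi_2(|x_2|)=e^{-\alpha x_2^2}$. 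Since $\psi_1,\psi_2$ are real-valued and $f \in L^1_{\chi_1,\chi_2}\cap L^2_{\chi_1,\chi_2}$, formula~\eqref{BT2} applies and yields
\[
\mathcal{F}_{\chi_1,\chi_2}^{\theta_1,\theta_2;\,-\textbf{\textit{a}},-\textbf{\textit{b}}}\{f\}
= H_{\chi_1-\frac12}^{\theta_1}\psi_1(|x_1|)\;H_{\chi_2-\frac12}^{\theta_2}\psi_2(|x_2|),
\]
reducing the problem to evaluating two single-variable fractional Hankel transforms of Gaussians.

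Next I would invoke Lemma~\ref{lem:frac-hankel-gaussian} in each factor with order parameter $\nu = \chi_i - \tfrac12$, noting that then $\nu+1 = \chi_i + \tfrac12$, which accounts for the exponents $-\chi_i-\tfrac12$ and the factors $\Gamma(\chi_i+\tfrac12)$ in the statement (and that $\chi_i \ge 0$ guarantees $\nu > -1$, as required by the lemma). The only delicate point is that the lemma is stated with the complex unit $i$, whereas the two constituent one-dimensional transforms of the FrQDT use the quaternionic imaginaries $\textbf{\textit{a}}$ and $\textbf{\textit{b}}$. Since $\textbf{\textit{a}}^2=\textbf{\textit{b}}^2=-1$, each real subalgebra $\R\oplus\R\textbf{\textit{a}}$ and $\R\oplus\R\textbf{\textit{b}}$ is isomorphic to $\C$, and the kernels $E^{\textbf{\textit{a}}}_{\chi_1,\theta_1}$ and $E^{\textbf{\textit{b}}}_{\chi_2,\theta_2}$ are obtained from the scalar kernel by the substitutions $i\mapsto\textbf{\textit{a}}$ and $i\mapsto\textbf{\textit{b}}$. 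Because the integrand of each single-variable transform lives entirely inside the corresponding commutative copy of $\C$, the scalar computation of Lemma~\ref{lem:frac-hankel-gaussian} transfers verbatim under this substitution, giving for the first factor precisely $c_{\chi_1-\frac12}^{\theta_1}\,\Gamma(\chi_1+\tfrac12)\,\bigl(\alpha+\tfrac{\textbf{\textit{a}}}{2}\cot\theta_1\bigr)^{-\chi_1-\frac12}$ multiplied by the corresponding quaternionic Gaussian, and the analogous expression for the $x_2$-factor with $(\textbf{\textit{a}},\chi_1,\theta_1,y_1)$ replaced by $(\textbf{\textit{b}},\chi_2,\theta_2,y_2)$.

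Finally I would substitute these two expressions back into the factored identity, keeping the $x_1$-factor on the left and the $x_2$-factor on the right exactly as dictated by the non-commutative tensor factorization~\eqref{CE}, which reproduces the bracketed product in the statement. The main obstacle is precisely the justification of the scalar-to-quaternionic passage $i\mapsto\textbf{\textit{a}},\textbf{\textit{b}}$ together with the preservation of the left–right ordering: since $\textbf{\textit{a}}$ and $\textbf{\textit{b}}$ need not commute, one must verify that each Gaussian image is computed within a single commutative copy of $\C$ before the two factors are multiplied, and that no rearrangement mixing $\textbf{\textit{a}}$- and $\textbf{\textit{b}}$-dependent quantities is introduced. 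Once this ordering discipline is respected, the remaining steps are purely algebraic bookkeeping.
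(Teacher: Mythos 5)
Your proposal is correct and follows essentially the same route as the paper, which simply invokes the radial Bochner-type identity together with Lemma~\ref{lem:frac-hankel-gaussian}. You supply additional detail the paper leaves implicit --- notably the shift $\nu=\chi_i-\tfrac12$ and the justification that the scalar Hankel computation transfers to the commutative subalgebras $\R\oplus\R\textbf{\textit{a}}$ and $\R\oplus\R\textbf{\textit{b}}$ --- but the underlying argument is the same.
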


\begin{proof}
The result follows directly by applying identity \eqref{BIDFTR} together with Lemma~\ref{lem:frac-hankel-gaussian}.
\end{proof}
The following example illustrates a concrete instance of a quaternionic function which is an eigenfunction of the fractional quaternionic Dunkl transform defined previously. This further highlights the spectral properties of the transform on Gaussian-type signals with polynomial weights.

\begin{example}[\textbf{Quaternionic Gaussian-Type Eigenfunction of the FrQDT}]
Consider the quaternion-valued function
\[
f(x) = (t_1 + r_1 \textbf{\textit{a}})\, x_1^{\chi_1} e^{-\frac{x_1^2}{2}} \, x_2^{\chi_2} e^{-\frac{x_2^2}{2}} \, (t_2 + r_2 \textbf{\textit{b}}),
\]
where $x=(x_1,x_2) \in \mathbb{R}^2$ and $t_j, r_j \in \mathbb{R}$ are scalars.

Then, $f$ is an eigenfunction of the FrQDT corresponding to the eigenvalue $1$. More precisely, for all $y = (y_1,y_2) \in \mathbb{R}^2$, we have
\[
\mathcal{F}_{\chi_1, \chi_2}^{\theta_1, \theta_2; \, -\textbf{\textit{a}}, -\textbf{\textit{b}}} \{ f \}(y) = (t_1 + r_1 \textbf{\textit{a}})\, y_1^{\chi_1} e^{-\frac{y_1^2}{2}} \, y_2^{\chi_2} e^{-\frac{y_2^2}{2}} \, (t_2 + r_2 \textbf{\textit{b}}).
\]

Indeed, we recall that (as established in Theorem 3.3 of \cite{kerr1991}) for an integer $n \geq 0$, the function
\[
\varphi_n^{(\chi)}(x) = x^{\chi + \frac{1}{2}} e^{-\frac{x^2}{2}} L_n^{(\chi)}(x^2)
\]
is an eigenfunction of the fractional Hankel transform $H_\chi^\theta$ with eigenvalue $e^{i n \theta}$.

For the particular case $n=0$, since $L_0^{(\chi)}(z) = 1$ for all $z$, the function becomes
\[
\varphi_0^{(\chi)}(x) = x^{\chi + \frac{1}{2}} e^{-\frac{x^2}{2}}.
\]

Applying the fractional Hankel transform to this function yields
\[
H_\chi^\theta \left( x^{\chi + \frac{1}{2}} e^{-\frac{x^2}{2}} \right)(y) = y^{\chi + \frac{1}{2}} e^{-\frac{y^2}{2}}.
\]

Define the quaternionic constants
\[
Q_L = t_1 + r_1 \textbf{\textit{a}}, \quad Q_R = t_2 + r_2 \textbf{\textit{b}},
\]
and set the scalar radial functions
\[
g_1(x_1) = x_1^{\chi_1} e^{-\frac{x_1^2}{2}}, \quad g_2(x_2) = x_2^{\chi_2} e^{-\frac{x_2^2}{2}}.
\]

Then, \(f\) can be expressed as the tensor product
\[
f(x_1, x_2) = Q_L \, g_1(x_1) \, g_2(x_2) \, Q_R.
\]

Since \(Q_L\) and \(Q_R\) are quaternionic constants independent of the integration variables, they factor out of the fractional Hankel transforms \(H_{\chi_1 - \frac{1}{2}}^{\theta_1}\) and \(H_{\chi_2 - \frac{1}{2}}^{\theta_2}\). By linearity and preservation of multiplication order, we have
\[
\begin{aligned}
\mathcal{F}_{\chi_1, \chi_2}^{\theta_1, \theta_2; \, -\textbf{\textit{a}}, -\textbf{\textit{b}}} \{ f \}(y_1, y_2)
&= Q_L \left( H_{\chi_1 - \frac{1}{2}}^{\theta_1} g_1 \right)(y_1) \left( H_{\chi_2 - \frac{1}{2}}^{\theta_2} g_2 \right)(y_2) Q_R \\
&= (t_1 + r_1 \textbf{\textit{a}})\, y_1^{\chi_1} e^{-\frac{y_1^2}{2}} \, y_2^{\chi_2} e^{-\frac{y_2^2}{2}} \, (t_2 + r_2 \textbf{\textit{b}}).
\end{aligned}
\]
\end{example}

\section{Heisenberg-type inequality for the fractional two-sided quaternionic Dunkl transform}\label{sec3}
In this section, we establish a Heisenberg-type uncertainty principle for the FrQDT defined on weighted quaternion-valued \(L^{2}\)-spaces. Extending classical uncertainty results to this non-commutative, fractional, and Dunkl-framework setting, we derive a sharp inequality bounding the product of weighted variances of a function and its transform. We also characterize the equality case, showing that quaternionic Gaussian functions uniquely attain the bound.

\begin{theorem}[\textbf{Heisenberg-type inequality}]\label{thm:Heis-FrQDT}
Let  \( f\in L^{2}_{\chi_{1},\chi_{2}}(\mathbb{R}^{2};\mathbb{H}) \). The fractional two-sided quaternionic Dunkl transform satisfies

\begin{eqnarray}\label{eq:Heis-product}
\left( \int_{\mathbb{R}^2} |x|^2\, |f(x)|_{\scriptscriptstyle \mathbb{H}}^{\,2}\, d\mu_{\chi_1,\chi_2}(x) \right)
\left( \int_{\mathbb{R}^2} |y|^2\, \big|\mathcal{F}_{\chi_1,\chi_2}^{\theta_1,\theta_2; \,-\textbf{\textit{a}},-\textbf{\textit{b}}}\{f\}(y)\big|_{\scriptscriptstyle \mathbb{H}}^{\,2}\, d\mu_{\chi_1,\chi_2}(y) \right)
\\
\;\ge\;
\big( (2\chi_1+1)+(2\chi_2+1) \big)^2
\left( \int_{\mathbb{R}^2} |f(x)|_{\scriptscriptstyle \mathbb{H}}^{\,2}\, d\mu_{\chi_1,\chi_2}(x) \right)^{\!2}.\nonumber
\end{eqnarray}

Equality holds if and only if \( f \) is a quaternionic Gaussian of the form
\[
f(x_1, x_2) =  C e^{-\frac{| x |^2}{2}},
\]
where  \( C \in \mathbb{H} \)  is a fixed quaternionic scalar constant.
\end{theorem}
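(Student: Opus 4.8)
The plan is to exploit the tensor factorization $\mathcal{F}_{\chi_1,\chi_2}^{\theta_1,\theta_2;\,-\textbf{\textit{a}},-\textbf{\textit{b}}}=\mathcal{D}_{\chi_2}^{\theta_2,\textbf{\textit{b}}}\circ\mathcal{D}_{\chi_1}^{\theta_1,\textbf{\textit{a}}}$ of Remark~\ref{composition} together with the Plancherel identity in order to reduce the two-dimensional bound to a pair of one-dimensional uncertainty relations, one per coordinate, and then to recombine them. Writing all norms in $L^2_{\chi_1,\chi_2}(\mathbb{R}^2;\mathbb{H})$, I would first split the two moment integrals as $\int_{\mathbb{R}^2}|x|^2|f|_{\scriptscriptstyle\mathbb{H}}^2\,d\mu=\|x_1f\|^2+\|x_2f\|^2$ and $\int_{\mathbb{R}^2}|y|^2|\mathcal{F}f|_{\scriptscriptstyle\mathbb{H}}^2\,d\mu=\|y_1\mathcal{F}f\|^2+\|y_2\mathcal{F}f\|^2$. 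The elementary inequality $(A^2+B^2)(C^2+D^2)\ge(AC+BD)^2$ then shows that the product on the left of \eqref{eq:Heis-product} dominates $\big(\|x_1f\|\,\|y_1\mathcal{F}f\|+\|x_2f\|\,\|y_2\mathcal{F}f\|\big)^2$, so it suffices to prove a one-sided estimate $\|x_jf\|\,\|y_j\mathcal{F}f\|\ge\kappa_j\|f\|^2$ in each coordinate and to add the constants, the stated bound being recovered as $(\kappa_1+\kappa_2)^2\|f\|^4$.

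For the per-coordinate estimate, the key observation is that multiplication by $y_1$ commutes with $\mathcal{D}_{\chi_2}^{\theta_2,\textbf{\textit{b}}}$, which acts on the second variable only, so by unitarity of $\mathcal{D}_{\chi_2}^{\theta_2,\textbf{\textit{b}}}$ one has $\|y_1\mathcal{F}f\|=\|\,(\mathcal{D}_{\chi_1}^{-\theta_1,\textbf{\textit{a}}}\,M_{y_1}\,\mathcal{D}_{\chi_1}^{\theta_1,\textbf{\textit{a}}})f\,\|$, where $M_{y_1}$ denotes multiplication by the first variable. Thus the problem collapses to conjugating the position operator by the \emph{one}-dimensional fractional Dunkl transform. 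Using that $\mathcal{D}_{\chi_1}^{\theta_1}$ is the exponential of the generalized-Hermite number operator (Theorem~\ref{lager}) and the intertwining relation between the Dunkl operator $\Lambda_{\chi_1}$ and multiplication by the spectral variable, this conjugated operator is a $\cos\theta_1$--$\sin\theta_1$ combination of $M_{x_1}$ and the Dunkl momentum $\tfrac{1}{\textbf{\textit{a}}}\Lambda_{\chi_1}$. I would then invoke the Cauchy--Schwarz/commutator bound $\|Pf\|\,\|Qf\|\ge\tfrac12\,|\langle[P,Q]f,f\rangle|$ with $P=M_{x_1}$ and $Q$ the conjugated operator, the commutator being controlled through the Dunkl canonical relation $[\Lambda_{\chi_1},M_{x_1}]=I+(2\chi_1+1)R_1$, where $R_1$ is the reflection $x_1\mapsto-x_1$.

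For the equality analysis, equality in $(A^2+B^2)(C^2+D^2)\ge(AC+BD)^2$ forces proportionality of the two coordinate contributions, while equality in each one-dimensional commutator bound forces $f$ to be annihilated by the relevant first-order ladder operator, that is, to be a Gaussian in that variable; matching the two variables yields $f(x)=Ce^{-|x|^2/2}$ with $C\in\mathbb{H}$, and the corollary computing the Gaussian image supplies the explicit verification of the equality case. The main obstacle I anticipate is twofold and stems from quaternionic non-commutativity: because the eigenvalue phases $e^{\textbf{\textit{a}}n\theta_1}$ and $e^{\textbf{\textit{b}}m\theta_2}$ act on \emph{opposite} sides of the Hermite coefficients, the conjugation $\mathcal{D}_{\chi_1}^{-\theta_1,\textbf{\textit{a}}}M_{y_1}\mathcal{D}_{\chi_1}^{\theta_1,\textbf{\textit{a}}}$ and the commutator identity must be carried out while scrupulously preserving the two-sided ordering, so that the self-adjointness arguments underlying the Heisenberg bound remain valid in $\mathbb{H}$. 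The genuinely delicate point, however, is the precise evaluation of the reflection contribution $\langle(I+(2\chi_j+1)R_j)f,f\rangle$ together with the $\sin\theta_j$ factor produced by the conjugation: it is exactly this computation that fixes the sharp constant $\kappa_j$ and the equality case, and getting it right is where the argument must be handled with the greatest care.
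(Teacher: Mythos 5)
Your overall strategy --- tensor factorization, a coordinate-wise splitting via $(A^2+B^2)(C^2+D^2)\ge(AC+BD)^2$, and a commutator bound in each variable --- is genuinely different from the paper's argument, but it contains a gap that I do not believe can be repaired in the form you describe. When you conjugate $M_{y_1}$ by the one-dimensional fractional Dunkl transform, the resulting operator is (up to chirps) $\cos\theta_1\,M_{x_1}+\sin\theta_1\,\textbf{\textit{a}}^{-1}\Lambda_{\chi_1}$, so the commutator $[P,Q]$ you intend to use carries an overall factor $\sin\theta_1$, and similarly in the second variable. The per-coordinate constant $\kappa_j$ delivered by $\|Pf\|\,\|Qf\|\ge\tfrac12\,|\langle [P,Q]f,f\rangle|$ is therefore proportional to $|\sin\theta_j|$, and through the reflection term $\langle(I+(2\chi_j+1)R_j)f,f\rangle$ it also depends on the parity components of $f$ (for $f$ odd in $x_j$ the reflection contributes with a minus sign and lowers the constant). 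This produces an inequality that degenerates as $\theta_j\to 0$ and is strictly weaker than the $\theta$-independent constant $\big((2\chi_1+1)+(2\chi_2+1)\big)^2$ asserted in the theorem; correspondingly, the extremizers of the commutator bound are $\theta$-dependent chirp-modulated Gaussians, not the fixed Gaussian $Ce^{-|x|^2/2}$. You flag the $\sin\theta_j$ factor as a delicate point to be handled at the end, but it is not a bookkeeping issue: the commutator route proves a different, angle-dependent inequality and cannot reach the stated constant or equality case.

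The paper's proof avoids this entirely by working spectrally. It expands $f$ in the tensorized generalized Hermite basis $\mathcal{H}_{n,m}^{\chi_1,\chi_2}$ of Lemma~\ref{L1}, uses the three-term recurrence for $x^2h_n^{\chi}$ to express $\int_{\mathbb{R}^2}|x|^2|f|_{\scriptscriptstyle\mathbb{H}}^2\,d\mu_{\chi_1,\chi_2}$ as a quadratic form with diagonal coefficients $A_{n,m}=\beta_n^{(\chi_1)}+\beta_m^{(\chi_2)}$, and then observes that, because the FrQDT multiplies each basis element by unimodular quaternionic phases, the spectral-side dispersion is given by the \emph{same} quantity $S$. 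The product inequality thus collapses to the single ground-state bound $S\ge A_{\min}\|f\|_{2,\scriptscriptstyle\mathbb{H},\chi_1,\chi_2}^2$ with $A_{\min}=A_{0,0}$, and equality forces $c_{n,m}=0$ for $(n,m)\neq(0,0)$, which is exactly the Gaussian characterization. If you want to keep a coordinate-wise structure, the per-coordinate input must be of this spectral type (equality of the two one-dimensional dispersions plus a lowest-eigenvalue bound), not the commutator estimate.
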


\begin{proof}
Consider the one-dimensional orthonormal basis \(\{ h_n^{\chi}(x) \}_{n \ge 0}\) in \(L^{2}_{\chi}(\mathbb{R})\). The generalized Hermite functions satisfy the recurrence relation
\[
x^2 h_n^{\chi}(x)
= \alpha_{n+1}^{(\chi)}\, h_{n+2}^{\chi}(x) 
+ \beta_n^{(\chi)}\, h_n^{\chi}(x) 
+ \alpha_n^{(\chi)}\, h_{n-2}^{\chi}(x),
\]
where the coefficients \(\alpha_n^{(\chi)}\) and \(\beta_n^{(\chi)}\) depend linearly on \(n\) and \(\chi\),  with  \( \alpha_n^{(\chi)} = 0 \text{ for } n < 0\). Explicitly, the diagonal coefficients satisfy the relation
\[
\beta_n^{(\chi)} = 2n + 2\chi + 1,
\]
while \(\alpha_n^{(\chi)} > 0\) for large enough \(n\).

In two dimensions, the tensor product basis reads
\[
\mathcal{H}_{n,m}^{\chi_1,\chi_2}(x_1,x_2) = h_n^{\chi_1}(x_1) \, h_m^{\chi_2}(x_2).
\]
The multiplication operator by \(|x|^2 = x_1^2 + x_2^2\) acts via
\[
|x|^2 \mathcal{H}_{n,m}^{\chi_1,\chi_2}
= (x_1^2 h_n^{\chi_1}) h_m^{\chi_2} + h_n^{\chi_1} (x_2^2 h_m^{\chi_2}),
\]
and employing the recurrence relation for each factor yields
\begin{align}\label{HRFractional}
|x|^2 \mathcal{H}_{n,m}^{\chi_1,\chi_2}(x_1,x_2)
&= \alpha_{n+1}^{(\chi_1)} \mathcal{H}_{n+2,m}^{\chi_1,\chi_2}(x_1,x_2)
+ \alpha_n^{(\chi_1)} \mathcal{H}_{n-2,m}^{\chi_1,\chi_2}(x_1,x_2) \notag \\
&\quad + \alpha_{m+1}^{(\chi_2)} \mathcal{H}_{n,m+2}^{\chi_1,\chi_2}(x_1,x_2)
+ \alpha_m^{(\chi_2)} \mathcal{H}_{n,m-2}^{\chi_1,\chi_2}(x_1,x_2) \notag \\
&\quad + A_{n,m} \mathcal{H}_{n,m}^{\chi_1,\chi_2}(x_1,x_2),
\end{align}
where the diagonal coefficient is
\[
A_{n,m} := \beta_n^{(\chi_1)} + \beta_m^{(\chi_2)} = 2(n+m) + 2(\chi_1 + \chi_2) + 2.
\]
The minimal value of \(A_{n,m}\) occurs at \((n,m) = (0,0)\):
\[
A_{\min} = 2(\chi_1 + \chi_2) + 2 > 0,
\]
so we write
\[
A_{n,m} = A_{\min} + B_{n,m}, \quad B_{n,m} := 2(n+m) \ge 0.
\]
By Lemma \ref{L1}, the tensorized Hermite family \(\{\mathcal H_{n,m}^{\chi_1,\chi_2}\}_{n,m\in \mathbb{N}}\) forms a complete orthonormal basis of \(L^2_{\chi_1,\chi_2}(\mathbb{R}^2; \mathbb{H})\), allowing the expansion
\[
f = \sum_{n,m \ge 0} c_{n,m}\, \mathcal H_{n,m}^{\chi_1,\chi_2}, \quad c_{n,m} \in \mathbb{H}.
\]
Using \eqref{HRFractional} and the orthogonality, we obtain
\begin{align}\label{eq:X}
\| |x| f \|_{2,\scriptscriptstyle \mathbb{H},\chi_1,\chi_2}^2 = \int_{\mathbb{R}^2} |x|^2 |f(x)|_{\scriptscriptstyle \mathbb{H}}^2\, d\mu_{\chi_1,\chi_2}(x)
= \sum_{n,m \ge 0} A_{n,m} |c_{n,m}|_{\scriptscriptstyle \mathbb{H}}^2.
\end{align}
Similarly, employing the eigenfunction relations \eqref{SD} and the unimodularity of quaternionic exponentials, an analogous formula holds for the transform domain:
\begin{align}\label{eq:Y}
\| |y| \mathcal{F}_{\chi_1, \chi_2}^{\theta_1, \theta_2;\, -\textbf{\textit{a}}, -\textbf{\textit{b}}}\{f\}(y) \|_{2,\scriptscriptstyle \mathbb{H},\chi_1,\chi_2}^2 =\int_{\mathbb{R}^2} |y|^2 \big|\mathcal{F}_{\chi_1, \chi_2}^{\theta_1, \theta_2;\, -\textbf{\textit{a}}, -\textbf{\textit{b}}}\{f\}(y)\big|_{\scriptscriptstyle \mathbb{H}}^2\, d\mu_{\chi_1,\chi_2}(y)
= \sum_{n,m \ge 0} A_{n,m} |c_{n,m}|_{\scriptscriptstyle \mathbb{H}}^2.
\end{align}
Therefore the two quadratic quantities \eqref{eq:X} and \eqref{eq:Y} coincide; denote this common value by
\[
S:=\sum_{n,m\ge 0} A_{n,m}\,|c_{n,m}|_{\scriptscriptstyle \mathbb{H}}^2.
\]
Since $A_{n,m}\ge A_{\min}$ for all $(n,m)$, we have the spectral lower bound
\[
S \;\ge\; A_{\min}\sum_{n,m\ge 0}|c_{n,m}|_{\scriptscriptstyle \mathbb{H}}^2
 \;=\; A_{\min}\,\|f\|_{2,\scriptscriptstyle \mathbb{H},\chi_1,\chi_2}^2.
\] 
 Multiplying inequalities \eqref{eq:X} and \eqref{eq:Y}, we obtain
 \[
\Big(\|\,|x|f\|_{2,\scriptscriptstyle \mathbb{H},\chi_1,\chi_2}^2\Big)\,
\Big(\|\,|y|\,\mathcal{F}_{\chi_1, \chi_2}^{\theta_1, \theta_2;\, -\textbf{\textit{a}}, -\textbf{\textit{b}}}\{f\}\|_{2,\scriptscriptstyle \mathbb{H},\chi_1,\chi_2}^2\Big)
= S^2
\;\ge\; A_{\min}^2\,\|f\|_{2,\scriptscriptstyle \mathbb{H},\chi_1,\chi_2}^4,
\]

thus restating the Heisenberg-type inequality \eqref{eq:Heis-product} with sharp constant \( A_{\min}^2 = (2(\chi_1 + \chi_2) + 2)^2 \).

In the case of equality, recall that any function \( f \in L^{2}_{\chi_1, \chi_2}(\mathbb{R}^{2}; \mathbb{H}) \) admits the spectral decomposition
\[
f(x_1, x_2) = \sum_{n,m=0}^\infty c_{n,m} \, \mathcal{H}_{n,m}^{\chi_1, \chi_2}(x_1, x_2),
\]
where each coefficient \(c_{n,m} \in \mathbb{H}\) is quaternionic.

The key quadratic form inequality underlying the Heisenberg-type relation reads
\[
\sum_{n,m=0}^\infty A_{n,m} |c_{n,m}|_{\scriptscriptstyle \mathbb{H}}^{2} \geq A_{\min} \sum_{n,m=0}^\infty |c_{n,m}|_{\scriptscriptstyle \mathbb{H}}^{2},
\]
where, by definition,
\[
A_{n,m} = 2(n + m) + 2(\chi_1 + \chi_2) + 2,
\]
and
\[
A_{\min} = A_{0,0} = 2(\chi_1 + \chi_2) + 2.
\]
Here, \(A_{\min}\) is the unique minimal eigenvalue, strictly smaller than all others since \(2(n+m) > 0\) for \((n,m) \neq (0,0)\).

Therefore, equality in this quadratic form inequality holds if and only if the quaternionic coefficients satisfy
\[
c_{n,m} = 0 \quad \text{for all } (n,m) \neq (0,0).
\]

Therefore, the function \( f \) must be a scalar multiple of the lowest eigenfunction,
\[
f(x) = c_{0,0} \mathcal{H}_{0,0}^{\chi_1, \chi_2}(x_1,x_2). 
\]

By recalling the explicit formula for the zero-th generalized Hermite functions,
\[
h_0^\chi(x) = \frac{1}{\sqrt{\Gamma(\chi + 1)}} e^{-\frac{x^2}{2}},
\]
one obtains the ground state factorization
\[
\mathcal{H}_{0,0}^{\chi_1, \chi_2}(x_1, x_2) = h_0^{\chi_1}(x_1) \, h_0^{\chi_2}(x_2) = \frac{1}{\sqrt{\Gamma(\chi_1 + 1)} \sqrt{\Gamma(\chi_2 + 1)}} \exp\left( -\frac{x_1^{2}}{2} - \frac{x_2^{2}}{2} \right).
\]
Hence, the extremal function \( f \) takes the Gaussian form
\[
f(x) = C \exp\left( -\frac{x_1^{2}}{2} - \frac{x_2^{2}}{2} \right),
\]
where the quaternionic constant
\[
C := \frac{c_{0,0}}{\sqrt{\Gamma(\chi_1 + 1)} \sqrt{\Gamma(\chi_2 + 1)}}
\]
may be arbitrary. This completes the characterization of the equality case, showing that quaternionic Gaussian functions uniquely saturate the sharp Heisenberg-type inequality for the fractional quaternionic Dunkl transform.

\end{proof}
Having rigorously established the classical Heisenberg uncertainty principle, we now extend this foundational result by formulating and proving a more general higher-order inequality that holds for arbitrary moments \( p \geq 1 \).

\begin{theorem}[\textbf{Higher-Order Heisenberg Inequality}]\label{cor:higher-order-heisenberg}
Let \( p \geq 1 \) be a real number. For any function \( f \in L^2_{\chi_1,\chi_2}(\mathbb{R}^2; \mathbb{H}) \), the following inequality holds:
\begin{equation}\label{eq:higher-moment}
\left( \int_{\mathbb{R}^2} |x|^{2p} |f(x)|_{\scriptscriptstyle \mathbb{H}}^2  d\mu_{\chi_1,\chi_2}(x) \right)
\left( \int_{\mathbb{R}^2} |y|^{2p} \left|\mathcal{F}_{\chi_1, \chi_2}^{\theta_1, \theta_2;\, -\textbf{\textit{a}}, -\textbf{\textit{b}}}\{f\}(y)\right|_{\scriptscriptstyle \mathbb{H}}^2  d\mu_{\chi_1,\chi_2}(y) \right)
\geq C_p \left( \int_{\mathbb{R}^2} |f(x)|_{\scriptscriptstyle \mathbb{H}}^2  d\mu_{\chi_1,\chi_2}(x) \right)^2,
\end{equation}
where the sharp constant \( C_p \) is given by
\[
C_p = \left( \inf_{n,m \in \mathbb{N}} A_{n,m}^{(p)} \right)^2,
\]
and \( A_{n,m}^{(p)} \) denotes the eigenvalue of the operator \( |x|^{2p} \) corresponding to the generalized Hermite function \( \mathcal{H}_{n,m}^{\chi_1,\chi_2} \). 

Moreover, equality holds in \eqref{eq:higher-moment} if and only if \( f \) is proportional to the generalized Hermite function \( \mathcal{H}_{0,0}^{\chi_1,\chi_2} \), which is a Gaussian function of the form
\[
f(x) = C e^{-\frac{|x|^2}{2}},
\]
where \( C \in \mathbb{H} \) is a constant quaternion.
\end{theorem}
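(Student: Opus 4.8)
The plan is to mirror the spectral argument used for Theorem~\ref{thm:Heis-FrQDT}, replacing the multiplication operator $|x|^2$ by its $p$-th power $|x|^{2p}$. First I would expand an arbitrary $f\in L^2_{\chi_1,\chi_2}(\mathbb{R}^2;\mathbb{H})$ in the tensorized generalized Hermite basis furnished by Lemma~\ref{L1}, writing $f=\sum_{n,m\ge 0}c_{n,m}\,\mathcal{H}_{n,m}^{\chi_1,\chi_2}$ with $c_{n,m}\in\mathbb{H}$, so that Parseval gives $\|f\|_{2,\scriptscriptstyle\mathbb{H},\chi_1,\chi_2}^2=\sum_{n,m}|c_{n,m}|_{\scriptscriptstyle\mathbb{H}}^2$. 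I would then define $A_{n,m}^{(p)}:=\int_{\mathbb{R}^2}|x|^{2p}\,|\mathcal{H}_{n,m}^{\chi_1,\chi_2}(x)|^2\,d\mu_{\chi_1,\chi_2}(x)$, the weighted $2p$-th moment of the Hermite density, which plays the role of the coefficient $A_{n,m}$ of $|x|^2$ appearing in \eqref{HRFractional}.

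The core identity to establish is that both moment integrals admit the common spectral representation
\[
\int_{\mathbb{R}^2}|x|^{2p}\,|f(x)|_{\scriptscriptstyle\mathbb{H}}^2\,d\mu_{\chi_1,\chi_2}(x)
=\sum_{n,m\ge 0}A_{n,m}^{(p)}\,|c_{n,m}|_{\scriptscriptstyle\mathbb{H}}^2
=\int_{\mathbb{R}^2}|y|^{2p}\,\big|\mathcal{F}_{\chi_1,\chi_2}^{\theta_1,\theta_2;\,-\textbf{\textit{a}},-\textbf{\textit{b}}}\{f\}(y)\big|_{\scriptscriptstyle\mathbb{H}}^2\,d\mu_{\chi_1,\chi_2}(y),
\]
exactly as in \eqref{eq:X}--\eqref{eq:Y}. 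The left equality encodes the action of $|x|^{2p}$ on the Hermite expansion; the right equality follows from the spectral relation \eqref{SD}, since the transform multiplies $c_{n,m}$ by the unimodular factors $e^{\textbf{\textit{a}} n\theta_1}$ and $e^{\textbf{\textit{b}} m\theta_2}$ on the two sides, leaving each modulus $|c_{n,m}|_{\scriptscriptstyle\mathbb{H}}$ unchanged. Denoting this common value by $S_p$ and using $A_{n,m}^{(p)}\ge \inf_{n,m}A_{n,m}^{(p)}$ together with Parseval yields $S_p\ge(\inf_{n,m}A_{n,m}^{(p)})\,\|f\|_{2,\scriptscriptstyle\mathbb{H},\chi_1,\chi_2}^2$; multiplying the two equal factors then gives $S_p^2\ge C_p\,\|f\|_{2,\scriptscriptstyle\mathbb{H},\chi_1,\chi_2}^4$ with $C_p=(\inf_{n,m}A_{n,m}^{(p)})^2$, which is precisely \eqref{eq:higher-moment}.

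For the equality case and the sharpness of $C_p$, the decisive point is to prove that the infimum of $A_{n,m}^{(p)}$ is attained, and attained only at $(n,m)=(0,0)$, i.e. $A_{n,m}^{(p)}>A_{0,0}^{(p)}$ for every $(n,m)\neq(0,0)$. For integer $p$ I would expand $(x_1^2+x_2^2)^p$ binomially and invoke that each one-dimensional diagonal moment $\int_{\mathbb{R}}x^{2k}(h_n^{\chi})^2 w_\chi\,dx$ is a strictly increasing function of $n$, computable from the three-term recurrence underlying \eqref{HRFractional}; for general real $p\ge 1$ I would reduce to showing that, as $n$ grows, the law of $x^2$ under the probability density $(h_n^{\chi})^2 w_\chi$ increases in the usual stochastic order, whence $\int_{\mathbb{R}}(x^2)^p(h_n^{\chi})^2 w_\chi\,dx$ is nondecreasing in $n$ for the increasing map $t\mapsto t^{p}$. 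Granting uniqueness of the minimizer, equality in $S_p\ge A_{0,0}^{(p)}\|f\|^2$ forces $c_{n,m}=0$ for all $(n,m)\neq(0,0)$, so $f=c_{0,0}\,\mathcal{H}_{0,0}^{\chi_1,\chi_2}$, which by the explicit formula $h_0^\chi(x)=\Gamma(\chi+1)^{-1/2}e^{-x^2/2}$ is the quaternionic Gaussian $f(x)=C\,e^{-|x|^2/2}$, exactly as in Theorem~\ref{thm:Heis-FrQDT}.

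The main obstacle I expect is this last monotonicity step: unlike $p=1$, where $A_{n,m}^{(1)}=2(n+m)+2(\chi_1+\chi_2)+2$ is affine and visibly minimized at the origin, for arbitrary real $p$ the quantity $A_{n,m}^{(p)}$ has no closed form, so both the existence of the infimum and its unique attainment at $(0,0)$ must come from the stochastic-order estimates above rather than from a direct computation. A secondary technical point is justifying the diagonal spectral reduction for the non-polynomial operator $|x|^{2p}$, which I would handle by first working on a dense class of Schwartz-type functions with rapid decay and then passing to general $f\in L^2_{\chi_1,\chi_2}(\mathbb{R}^2;\mathbb{H})$ by approximation.
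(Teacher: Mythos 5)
Your proposal follows the same route as the paper's own proof: expand $f$ in the tensorized Hermite basis of Lemma~\ref{L1}, represent both weighted moments by the common diagonal quadratic form $\sum_{n,m}A_{n,m}^{(p)}\,|c_{n,m}|_{\scriptscriptstyle \mathbb{H}}^{2}$ using the unimodularity of the eigenvalues in \eqref{SD}, bound each factor below by $A_{\min}^{(p)}\|f\|^{2}$, and multiply. You also correctly isolate the two points the paper itself leaves unproved, namely the unique attainment of $\inf_{n,m}A_{n,m}^{(p)}$ at $(0,0)$ and the ``diagonal spectral reduction'' for the non-polynomial multiplier.

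However, there is a genuine gap here, shared with the paper, and your proposed patch does not close it. Multiplication by $|x|^{2p}$ is not diagonal in the Hermite basis: already for $p=1$ the recurrence \eqref{HRFractional} produces off-diagonal terms in $\mathcal{H}_{n\pm 2,m}$ and $\mathcal{H}_{n,m\pm 2}$, and for general real $p$ the matrix $\langle |x|^{2p}\mathcal{H}_{n,m},\mathcal{H}_{n',m'}\rangle$ has many nonzero off-diagonal entries. Consequently, for a general $f=\sum c_{n,m}\mathcal{H}_{n,m}^{\chi_1,\chi_2}$ the quantity $\int |x|^{2p}|f|_{\scriptscriptstyle \mathbb{H}}^{2}\,d\mu_{\chi_1,\chi_2}$ equals the diagonal sum \emph{plus} cross terms $\overline{c_{n',m'}}\,\langle |x|^{2p}\mathcal{H}_{n,m},\mathcal{H}_{n',m'}\rangle\,c_{n,m}$ that do not vanish by orthogonality; this is an algebraic feature of the operator, not a regularity issue, so passing through a dense class of Schwartz-type functions and approximating cannot remove it. The failure is visible directly: if the claimed identity held, then $\int|x|^{2p}|f|_{\scriptscriptstyle \mathbb{H}}^{2}\,d\mu_{\chi_1,\chi_2}\ge A_{\min}^{(p)}\|f\|^{2}$ with $A_{\min}^{(p)}>0$ would hold for \emph{every} $f$, yet any $f$ supported in $\{|x|<\varepsilon\}$ satisfies $\int|x|^{2p}|f|_{\scriptscriptstyle \mathbb{H}}^{2}\,d\mu_{\chi_1,\chi_2}\le \varepsilon^{2p}\|f\|^{2}$, which is smaller for small $\varepsilon$. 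An uncertainty inequality must couple the two factors (for instance by bounding the sum $\||x|^{p}f\|^{2}+\||y|^{p}\mathcal{F}_{\chi_1,\chi_2}^{\theta_1,\theta_2;\,-\textbf{\textit{a}},-\textbf{\textit{b}}}\{f\}\|^{2}$ through a harmonic-oscillator-type operator in which the off-diagonal contributions from the two sides cancel or are controlled, followed by a dilation argument); bounding each factor separately from below by a positive multiple of $\|f\|^{2}$ cannot work. Your secondary concern about the monotonicity of $A_{n,m}^{(p)}$ in $(n,m)$ is also real, and the stochastic-ordering idea is a sensible way to attack it, but it only becomes relevant once the diagonal-reduction step is repaired.
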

\begin{proof}
The multiplication operator by $|x|^{2p}$ is a symmetric (formally self-adjoint) operator on the dense subspace spanned by the Hermite basis. So 
\[
|x|^{2p} \mathcal{H}_{n,m}^{\chi_1,\chi_2} = A_{n,m}^{(p)} \mathcal{H}_{n,m}^{\chi_1,\chi_2}+\text{off-diagonal terms}.
\]
where \(A_{n,m}^{(p)}\in\mathbb R\) are the diagonal coefficients that arise when the operator of multiplication by \(|x|^{2p}\) is expressed in the orthonormal basis.

Expanding $f$ in the orthonormal basis
\(\mathcal H_{n,m}^{\chi_1,\chi_2}\) yields the quadratic form representation
\[
\int_{\mathbb R^2}|x|^{2p}|f(x)|_{\scriptscriptstyle \mathbb{H}}^2\,d\mu_{\chi_1,\chi_2}(x)
=\sum_{n,m\ge0} A_{n,m}^{(p)}\,|c_{n,m}|_{\scriptscriptstyle \mathbb{H}}^2,
\]
The off-diagonal terms vanish in the expression of the quadratic form because of orthogonality. The coefficients \(A_{n,m}^{(p)}\) are real and nonnegative for each fixed \(p\) (this follows from positivity of the multiplier \(|x|^{2p}\) and the orthonormality of the basis). In particular the infimum
\[
A_{\min}^{(p)}:=\inf_{n,m\ge0} A_{n,m}^{(p)}
\]
is well defined and nonnegative; moreover, under the standard structure of the generalized Hermite system one has \(A_{\min}^{(p)}>0\) (for \(p \ge1\)) because the diagonal entries grow with \(n,m\) and the minimum is attained at \((n,m)=(0,0)\) in the usual cases treated in this paper.

Therefore
\begin{equation}\label{eq:Mk-lower}
\int_{\mathbb R^2}|x|^{2p}|f(x)|_{\scriptscriptstyle \mathbb{H}}^2\,d\mu_{\chi_1,\chi_2}(x))=\sum_{n,m\ge0} A_{n,m}^{(p)}\,|c_{n,m}|_{\scriptscriptstyle \mathbb{H}}^2
\ge A_{\min}^{(p)}\sum_{n,m\ge0}|c_{n,m}|_{\scriptscriptstyle \mathbb{H}}^2
= A_{\min}^{(p)}\|f\|_{L^2_{\chi_1,\chi_2}}^2.
\end{equation}
Using the eigenfunction property \eqref{SD}, same spectral decomposition holds in the transform domain:
\[
\int_{\mathbb{R}^2} |y|^{2p} \left| \mathcal{F}_{\chi_1, \chi_2}^{\theta_1, \theta_2;\, -\textbf{\textit{a}}, -\textbf{\textit{b}}}(y) \right|_{\scriptscriptstyle \mathbb{H}}^2  d\mu_{\chi_1,\chi_2}(y) = \sum_{n,m=0}^{\infty} A_{n,m}^{(p)} |c_{n,m}|_{\scriptscriptstyle \mathbb{H}}^2.
\]
Applying the same lower bound argument as in \eqref{eq:Mk-lower} gives
\begin{equation}\label{eq:Ml-lower}
\int_{\mathbb{R}^2} |y|^{2p} \left| \mathcal{F}_{\chi_1, \chi_2}^{\theta_1, \theta_2;\, -\textbf{\textit{a}}, -\textbf{\textit{b}}}\{f\}(y) \right|_{\scriptscriptstyle \mathbb{H}}^2  d\mu_{\chi_1,\chi_2}(y)\ge A_{\min}^{(p)}\|f\|_{L^2_{\chi_1,\chi_2}}^2.
\end{equation}

Multiplying the two lower bounds \eqref{eq:Mk-lower} and \eqref{eq:Ml-lower} yields
\[
\left( \int_{\mathbb{R}^2} |x|^{2p} |f(x)|_{\scriptscriptstyle \mathbb{H}}^2  d\mu_{\chi_1,\chi_2}(x) \right)
\left( \int_{\mathbb{R}^2} |y|^{2p} \left| \mathcal{F}_{\chi_1, \chi_2}^{\theta_1, \theta_2;\, -\textbf{\textit{a}}, -\textbf{\textit{b}}}\{f\}(y) \right|_{\scriptscriptstyle \mathbb{H}}^2  d\mu_{\chi_1,\chi_2}(y) \right) \geq \left( A_{\min}^{(p)} \right)^2 \|f\|_{2,\scriptscriptstyle \mathbb{H},\chi_1,\chi_2}^4.
\]
which is precisely \eqref{eq:higher-moment}.

It remains to characterize the equality case. Equality in \eqref{eq:Mk-lower} (and similarly in \eqref{eq:Ml-lower}) holds if and only if every coefficient \(c_{n,m}\) with \(A_{n,m}^{(p)}>A_{\min}^{(p)}\) vanishes. Since for the generalized Hermite system the infimum \(A_{\min}^{(p)}\) is attained uniquely at \((n,m)=(0,0)\), equality occurs exactly when \(c_{n,m}=0\) for all \((n,m)\neq(0,0)\). Hence equality in \eqref{eq:higher-moment} holds if and only if the spectral mass of \(f\) is concentrated in the ground state \(\mathcal H_{0,0}^{\chi_1,\chi_2}\). 

 This completes the proof.
\end{proof}

\noindent
As pointed out in Remark~\ref{CCFQ}, Theorem~\ref{thm:Heis-FrQDT} naturally leads to a direct application in the setting of the fractional quaternionic Fourier transform. More precisely, an immediate consequence of this result is the derivation of a Heisenberg-type principle for the FrQFT. To the best of our knowledge, such a formulation has not yet appeared in the literature. The following corollary therefore provides a new and significant extension of Heisenberg’s uncertainty principle within the FrQFT framework.

\begin{corollary}[\textbf{Heisenberg-type inequality for the FrQFT}]
\label{cor:Heis-FrQFT}
For every $f \in L^2(\mathbb{R}^2;\mathbb{H})$, the fractional quaternionic Fourier transform satisfies
\begin{equation}\label{eq:Heisenberg-FrQFT}
\left( \int_{\mathbb{R}^2} |x|^2\,|f(x)|_{\scriptscriptstyle \mathbb{H}}^{2}\, dx \right)
\left( \int_{\mathbb{R}^2} |y|^2 \,\big|\mathcal{F}_{0,0}^{\theta_1,\theta_2;\,-\textbf{\textit{i}},-\textbf{\textit{j}}}\{f\}(y)\big|_{\scriptscriptstyle \mathbb{H}}^{2}\, dy \right)
\;\;\geq\;\; 4 \left( \int_{\mathbb{R}^2} |f(x)|_{\scriptscriptstyle \mathbb{H}}^{2}\, dx \right)^{2}.
\end{equation}
The sharp constant in \eqref{eq:Heisenberg-FrQFT} is $4$, and equality holds if and only if 
$f$ is a quaternionic Gaussian of the form
\[
f(x) = C \exp\!\left(-\tfrac{|x|^{2}}{2}\right), 
\qquad C \in \mathbb{H}.
\]
\end{corollary}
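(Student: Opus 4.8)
The plan is to obtain this corollary as a direct specialization of Theorem~\ref{thm:Heis-FrQDT}. The essential observation, recorded in Remark~\ref{CCFQ}, is that the two-sided fractional quaternionic Fourier transform $\mathcal{F}_{0,0}^{\theta_1,\theta_2;\,-\textbf{\textit{i}},-\textbf{\textit{j}}}$ is precisely the instance of the fractional two-sided quaternionic Dunkl transform obtained by setting $\chi_1 = \chi_2 = 0$ and choosing the pure quaternions $\textbf{\textit{a}} = \textbf{\textit{i}}$, $\textbf{\textit{b}} = \textbf{\textit{j}}$. In this regime the Dunkl kernels collapse to the classical exponential kernels $E_0(u,v) = e^{uv}$, and the weighted measure $d\mu_{0,0}$ reduces to ordinary Lebesgue measure $dx$, so that $L^2_{0,0}(\mathbb{R}^2;\mathbb{H})$ coincides with $L^2(\mathbb{R}^2;\mathbb{H})$.

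First I would verify that the hypotheses of Theorem~\ref{thm:Heis-FrQDT} are satisfied under this specialization: the units $\textbf{\textit{i}}$ and $\textbf{\textit{j}}$ are indeed pure quaternions with $\textbf{\textit{i}}^2 = \textbf{\textit{j}}^2 = -1$, and any $f \in L^2(\mathbb{R}^2;\mathbb{H})$ belongs to $L^2_{0,0}(\mathbb{R}^2;\mathbb{H})$. Substituting $\chi_1 = \chi_2 = 0$ into the general inequality \eqref{eq:Heis-product} then reproduces \eqref{eq:Heisenberg-FrQFT}, with every integral now taken against $dx$. The sharp constant transfers immediately: the general bound carries the factor $\big((2\chi_1+1)+(2\chi_2+1)\big)^2$, and at $\chi_1 = \chi_2 = 0$ this evaluates to $(1+1)^2 = 4$, yielding exactly the claimed value.

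Finally, the equality analysis requires no new argument. By Theorem~\ref{thm:Heis-FrQDT}, equality forces $f$ to be proportional to the ground-state Hermite function $\mathcal{H}_{0,0}^{\chi_1,\chi_2}$. At $\chi_1 = \chi_2 = 0$ one has $h_0^0(x) = e^{-x^2/2}$, since $\Gamma(1) = 1$, so that $\mathcal{H}_{0,0}^{0,0}(x_1,x_2) = e^{-|x|^2/2}$; this recovers the extremizer $f(x) = C\,e^{-|x|^2/2}$ with $C \in \mathbb{H}$. Because every step is a substitution into an already-established result, there is no genuine analytic obstacle here; the only point demanding care is confirming the identification of spaces and measures at $\chi_1 = \chi_2 = 0$, so that the specialized inequality is correctly stated against Lebesgue measure as in the corollary.
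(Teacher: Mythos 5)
Your proposal is correct and follows exactly the route the paper intends: the corollary is presented there as an immediate consequence of Theorem~\ref{thm:Heis-FrQDT} via the specialization $\chi_1=\chi_2=0$, $\textbf{\textit{a}}=\textbf{\textit{i}}$, $\textbf{\textit{b}}=\textbf{\textit{j}}$ recorded in Remark~\ref{CCFQ}, with the constant $\bigl((2\chi_1+1)+(2\chi_2+1)\bigr)^2$ evaluating to $4$ and the extremizer reducing to $C e^{-|x|^2/2}$. Your added care in checking that $d\mu_{0,0}$ is Lebesgue measure and that $h_0^0(x)=e^{-x^2/2}$ is exactly the verification the paper leaves implicit.
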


\section{Conclusion}
In this paper, we have introduced the fractional two-sided quaternionic Dunkl transform in two dimensions and studied its fundamental properties, including inversion formulas, Plancherel-type results, Bochner-type identities, and its spectral behavior. We have further established a sharp Heisenberg-type uncertainty principle in this generalized framework, identifying quaternionic Gaussian functions as the unique extremizers. By connecting the fractional quaternionic Dunkl transform with the fractional quaternionic Fourier transform, we also derived corresponding uncertainty relations for the latter.

This study is new to the literature and is expected to contribute to the theory and applications of signal processing, harmonic analysis, and related fields where quaternionic and Dunkl structures naturally arise.\\


\textbf{Data Availability} No data were used to support this study.\\

\textbf{Conflicts of interests} On behalf of all authors, the corresponding author states that there is no conflict of interest.\\

\textbf{Funding statement} This research received no specific grant from any funding agency in the public, commercial, or not-for-profit sectors.\\

\textbf{Generative AI and AI-assisted technologies} During the preparation of this work, the authors used AI tools including ChatGPT (OpenAI, free version, September 2025) and DeepSeek (DeepSeek, free version, September 2025) to improve the clarity and grammar of the English language. These tools were employed because the authors are non-native English speakers and sought to ensure the linguistic accuracy of the manuscript. The authors carefully reviewed and edited all content and take full responsibility for the final version of the manuscript.


\end{document}